\def\v{{\mathbf v}} \def\y{{\mathbf y}} \def\x{{\mathbf x}}
\def\z{{\mathbf z}} \def\b{{\mathbf b}} \def\e{{\mathbf e}}
  \def\q{{\mathbf q}}
\def\s{{\mathbf s}}  \def\g{{\mathbf g}}
\def\w{{\mathbf w}}   \def\p{{\mathbf p}}
 \def\d{{\mathbf d}} \def\h{{\mathbf h}}
 \def\tilde{\widetilde} \def\hat{\widehat}
\def\tr{\hbox{\,tr\,}}
\def\sd{\hbox{\,sd\,}}
\def\Span{\hbox{\,Span\,}}
\title{A variation of Broyden Class methods using Householder adaptive transforms}
\author{Stefano Cipolla\footnotemark[1] ,  C. Di Fiore\footnotemark[2] , P. Zellini\footnotemark[2]}
\newtheorem{remark}{Remark}
\newtheorem{definition}{Definition}
\newtheorem{corollary}{Corollary}
\newtheorem{theorem}{Theorem}
\newtheorem{lemma}{Lemma}
\newtheorem{assumption}{Assumption}
\newtheorem{prob}{Problem}
\begin{document}
	\maketitle
	\renewcommand{\thefootnote}{\fnsymbol{footnote}}
	\footnotetext[1]{Department of Mathematics ``Tullio Levi-Civita'', University of Padua, Padova, Italy (\texttt{stefano.cipolla@unipd.it}) }
	\footnotetext[2]{ University of Rome ``Tor Vergata'' Via Della Ricerca Scientifica 1, 00133 Rome \\ (\texttt{{difiore@mat.uniroma2.it}}, {\texttt{zellini@mat.uniroma2.it}})}

	\begin{abstract}{In this work we introduce and study  novel Quasi Newton minimization methods based on a Hessian approximation Broyden Class-\textit{type} updating scheme, where a suitable matrix $\tilde{B}_k$ is updated instead of the current Hessian approximation $B_k$. We identify conditions which  imply the convergence of the algorithm and, if exact line search is chosen, its quadratic termination. By a remarkable connection between the projection operation and Krylov spaces, such conditions can be ensured using low complexity matrices $\tilde{B}_k$ obtained projecting $B_k$ onto algebras of matrices diagonalized by
			products of two or three Householder matrices {adaptively chosen step by step}.
			Extended experimental tests show   {that the introduction of the adaptive criterion, which theoretically guarantees the convergence, considerably improves the robustness of the minimization schemes when compared with a non-adaptive choice;} moreover, they show that the proposed methods {could be} particularly suitable to solve large scale problems where $L$-$BFGS$ performs poorly.}

\end{abstract}
\begin{keywords}{Unconstrained minimization \and quasi-Newton methods \and	 matrix algebras \and matrix projections preserving directions}\end{keywords}

\section{Introduction}
In minimizing a function $f:\mathbb{R}^n\rightarrow \mathbb{R}$, in order to reduce the computational cost per iteration and the memory required for implementation of the well known $BFGS$ minimization method, it is proposed in \cite{DFLZ,DFLZHART,DFZ,DFZ1,D} to use a $BFGS$\textit{-type} updating scheme {which updates, at each step, a suitable approximation of the Hessian approximation $ B_{k}$, usually denoted by  $\tilde{B}_{k} $}. This scheme is named $\mathcal{L}$QN when the matrix $\tilde{B}_k$ is the projection $\mathcal{L}_{B_k}$ of the matrix $B_k$ in a matrix algebra {$\mathcal{L}$ of matrices simultaneously diagonalized by a given unitary transform $U$ (we write $\mathcal{L}:=\sd U$, see \eqref{eq:sdu_definition} for a precise definition).} The implementation of the $\mathcal{L}$QN turns out to be very cheap when $U$ defines a low complexity transform.

While in \cite{BDCFZ,CCD,ebrahimib} $\mathcal{L}$ is a fixed matrix algebra, in \cite{D,DFZ} it is observed that an adaptive choice of $\mathcal{L}$, i.e, using different algebras $\mathcal{L}^{(k)}$ for each iteration $k$, could preserve more information from the original matrix $B_k$, and thus improve  the efficiency of $\mathcal{L}$QN. In \cite{CDTZ} it is introduced a convergent $\mathcal{L}^{(k)}$QN scheme whose effectiveness is shown by preliminary numerical experiences.

The main contribution of this work is twofold. On the one hand we extend the theoretical framework and the convergence theory developed in {\cite{DFLZ,CDTZ}} for   $BFGS$-type  techniques to the restricted Broyden Class-type of quasi Newton methods (for the restricted Broyden Class see \cite{BNY}). 

{On the other hand, we consider the special Broyden Class-type methods in which the update of $B_k$ has the form 
	\begin{equation} \label{eq:low_rank_unpdate_introduction}
	B_{k+1}= \Phi(\mathcal{L}^{(k)}_{B_k},\s_k,\y_k,\phi),
	\end{equation}
	where {$\s_k:=\x_{k+1}-\x_k$, $\y_k:=\g_{k+1}-\g_k$ ($\g_{k}:=\nabla f(\x_k)$), $\x_k$ is the current guess of the minimum and } the transform $U_k$, which diagonalizes the matrices of $\mathcal{L}^{(k)}$, is the product of few Householder reflections. Exploiting the fact that a Householder reflection is a rank one modification of the identity, we propose an algorithm to implement the update in equation \eqref{eq:low_rank_unpdate_introduction} using $O(n)$ operations per step: hence the complexity of the Quasi-Newton methods so obtained is comparable to the more traditional methods of limited-memory type.	Additionally, we show that {if} the projections $\mathcal{L}_{B_k}^{(k)}$ {are such that} 
	\begin{equation} \label{eq:numeri_romani_1}
	{\tag{I} 
		\tr{\mathcal{L}^{(k)}_{B_k}} \leq \tr{B_k},\;\;\det {\mathcal{L}_{B_k}^{(k)}} \geq \det {B_k},}
	\end{equation}
	{and} 
	\begin{equation}\label{eq:numeri_romani_2}
	{\tag{II}}
	\mathcal{L}^{(k)}_{B_k}\s_k=B_k\s_k ,
	\end{equation} 
	then the new $\mathcal{L}^{(k)}$QN method is sound (see Algorithm~\ref{convergentlkqn_no_fq}) if the objective function is convex and has a minimizer (see Theorem~\ref{Sconvergence}, Theorem~\ref{directionpreservingprojection} and Corollary~\ref{remark:ontotallynonlinear}).
	
	The $\mathcal{L}^{(k)}$QN methods so obtained turn out to be a remarkable refinement of the methods introduced in \cite{CDTZ}. Observe that equation \eqref{eq:numeri_romani_2}, which allows to mimic the $BFGS$ self correction properties (see Section \ref{selfcorrectinanalysis}), is equivalent to the equality $(\mathcal{L}^{(k)}_{B_k})^{-1}\g_k=B_k^{-1}\g_k$, i.e., the {new introduced} method (Algorithm \ref{convergentlkqn_no_fq}) belongs to \textit{both}{ the $\mathcal{S}$ecant and $\mathcal{N}$on $\mathcal{S}$ecant class of Broyden Class-type methods (see {\cite{DFLZ,CDTZ}} and Section \ref{themetods} for the precise definitions)}, thus rising a question on the very meaning of secant equation in Quasi-Newton methods \cite{CDTZ}.}
{Moreover, developing a further adaptive criterion (see \eqref{h0condion}) for the choice of  $\tilde{B}_k=\mathcal{L}^{(k)}_{B_k}$, we produce a low complexity convergent $\mathcal{L}^{(k)}$QN  with quadratic termination property (see Algorithm \ref{convergentlkqn}). }

The {proposed} adaptive criteria can be satisfied by $\mathcal{L}^{(k)}= \sd U_k$ where $U_k$ is the  product of {three} Householder matrices. Algorithm~\ref{convergentlkqn_no_fq} and Algorithm~\ref{convergentlkqn} can be implemented by storing, respectively, $15$ or $17$ vectors of length $n$, whereas $L$-$BFGS$ -- a limited memory version of $BFGS$ suitable to solve large scale problems \cite{LN,numopt} -- requires $2M+2$ vectors of length $n$ (being $M$ the number of $\s_j,\y_j$ used to define $B_{k+1}$). Even if $L$-$BFGS$ is usually used with small values of $M$, it is well known that for some problems (see for example \cite{jiang2004preconditioned}) a greater value of $M$ could be required, and hence, for these problems, the memory required for the implementation of the algorithms here proposed could be considerably smaller. 
Note, moreover, that in contrast with $L$-$BFGS$ where some information is discarded at each step,  in Algorithm \ref{convergentlkqn_no_fq} and Algorithm \ref{convergentlkqn} the second order information generated in all the previous steps is stored in an approximate way.

Using performance profiles \cite{dolan2002benchmarking}  based on iterations, function evaluations and time, the results of numerical experiences on set of problems, taken from CUTEst \cite{gould2015cutest}, are provided. These experiences confirm that the proposed scheme (Algorithm 5) permits to guarantee a better level of approximation of second order information if compared with $L$-$BFGS$ (even if a big value of $M$ is chosen) resulting on an increased robustness. Additional numerical experiences on a different set of problems, see Experiment 2, highlight the competitiveness of our proposals if compared to previous $\mathcal{L}QN$ algorithms studied in literature. 
	
Moreover, following the ideas developed, for instance, in \cite{al1993analysis,al1998global}, a suitable scaling improves the efficiency of $\mathcal{L}^{(k)}QN$. In particular, Scaled (Sc) $\mathcal{L}^{(k)}$QN turns out to be competitive, in some cases, with respect to $L$-$BFGS$ (see Remark \ref{remark:scaling} and Section \ref{sec:numerical_results}). 
\section{Notation and preliminaries}
	We will freely use familiar properties of symmetric positive definite matrices and fundamental results concerning algebras of matrices simultaneously diagonalized by a given unitary transform. 
	
	We use the shorthand pd to denote a real symmetric positive definite matrix. Given a vector $\mathbf{z}\in \mathbb{R}^n$ we write $\mathbf{z}>0$ to denote entry-wise positivity. Let $d(\z)$ be the diagonal matrix whose diagonal entries are the components of $\z$; {let $d(A)$ and $\lambda(A)$ be the vectors of the diagonal entries and of the eigenvalues of a given matrix $A$, respectively.} Finally, the symbol $\| \cdot \|$ will denote both the euclidean norm for vectors and the corresponding induced norm for matrices. 
	
\subsection{Matrix Algebras} \label{matrixalgebras}
Let $M_n({\mathbb{C}})$ be the set of all $n \times n $ matrices with complex entries. Given a unitary matrix  $U\;\in\;M_n({\mathbb{C}})$ (i.e. U $n \times n $ and $U^H=U^{-1}$), {define the following algebra $\mathcal{L}$ of matrices:}
\begin{equation}\label{eq:sdu_definition}
\mathcal{L} := \sd\,U=\{Ud(\z)U^H\;:\;\z\;\in\;\mathbb{C}^n\}.
\end{equation}

\noindent {Given a matrix $B \in M_n({\mathbb{C}})$, by the Hilbert projection theorem, there exists a unique element $\mathcal{L}_B \in \mathcal{L}$ such that \begin{equation}
	||\mathcal{L}_B-B||_F\leq ||X-B||_F,\;\;\forall \;X\;\in\;\mathcal{L},
	\end{equation} where $\|\cdot\|_F$ denotes the Frobenius norm. It is easy to find the following explicit formula for $\mathcal{L}_B$ (see for example \cite{DFLZ}):
	\begin{equation} \label{eq:proj_expressionref}
	\mathcal{L}_B=Ud(\z_B)U^H, \hbox{ where } [\z_B]_i=[U^HBU]_{ii},\; i=1,\dots,n.
	\end{equation} 
	$\mathcal{L}_B$ will be called the \textit{best approximation in Frobenius norm} of $B$ in $\mathcal{L}$.}

For the sake of completeness we recall hereafter few important results on the projection $\mathcal{L}_B$ of a matrix $B$ onto a $\sd U$ subspace.  
\begin{lemma} \label{repspecprop}
	{Let $U$ be an unitary matrix, let $\mathcal{L}=\sd{U}$ and let $B\;\in\;M_n(\mathbb{C})$.} 
	\begin{enumerate}
		\item If $B=\x\y^T$, then $z_{\x\y^T}=d(U^H\x)U^T\y$ where $\x,\y \in \mathbb{C}^{n}$.
		\item If $B=B^H$, then $\mathcal{L}_B=\mathcal{L}_B^H$ and $\min\, \lambda (B)\leq \lambda (\mathcal{L}_B)\leq \max\, \lambda (B)$ where $\lambda(X)$ denotes the generic eigenvalue of $X$. Therefore $\mathcal{L}_B$ is Hermitian positive definite whenever $B$ is Hermitian positive definite. 
		\item If $B\in\mathbb{R}^{n \times n}$ then $\mathcal{L}_B\in\mathbb{R}^{n \times n}$ whenever {$\mathcal{L}$ is closed under conjugation (i.e., $A \in \mathcal{L} \Rightarrow \overline{A} \in \mathcal{L}$)}.
		{		\item $ \tr(\mathcal{L}_B)= \tr (B)$
			\item If $B$ is pd, then $\det(B) \leq \det(\mathcal{L}_B)$ where the equality holds iff $U$ diagonalizes B, i.e., iff $U^HBU$ is diagonal.}
	\end{enumerate}
\end{lemma}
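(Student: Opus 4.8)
The plan is to derive every assertion from the explicit projection formula \eqref{eq:proj_expressionref}, which reduces each claim to a statement about the diagonal of $U^HBU$. For item 1, I would substitute $B=\x\y^T$ and read off the $i$-th diagonal entry of $U^H\x\y^TU=(U^H\x)(U^T\y)^T$ as the product $(U^H\x)_i(U^T\y)_i$, which is exactly the $i$-th component of $d(U^H\x)U^T\y$. Item 4 is immediate from the cyclic invariance of the trace together with $U^HU=I$: since $\tr(\mathcal{L}_B)=\tr(d(\z_B))=\sum_i[U^HBU]_{ii}=\tr(U^HBU)=\tr(B)$, nothing further is needed.

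For item 2, I would observe that when $B=B^H$ the matrix $U^HBU$ is again Hermitian, so its diagonal entries are real; hence $\z_B$ is real, $d(\z_B)=d(\z_B)^H$, and $\mathcal{L}_B=Ud(\z_B)U^H$ is Hermitian. The eigenvalues of $\mathcal{L}_B$ are precisely the entries $[\z_B]_i=\u_i^HB\u_i$, where $\u_i$ is the $i$-th column of $U$ and is a unit vector; the Rayleigh quotient bounds then give $\min\lambda(B)\le\u_i^HB\u_i\le\max\lambda(B)$, which is the required eigenvalue localisation, and positive definiteness of $\mathcal{L}_B$ follows at once when $B$ is pd.

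Items 3 and 5 carry the real content. For item 3 I would invoke uniqueness of the best Frobenius approximation: since $B$ is real, $\|\bar{\mathcal{L}_B}-B\|_F=\|\bar{\mathcal{L}_B-B}\|_F=\|\mathcal{L}_B-B\|_F$, and closure of $\mathcal{L}$ under conjugation guarantees $\bar{\mathcal{L}_B}\in\mathcal{L}$; the minimiser being unique forces $\bar{\mathcal{L}_B}=\mathcal{L}_B$, i.e. $\mathcal{L}_B$ is real. For item 5, writing $C:=U^HBU$ (Hermitian pd with $\det C=\det B$, as $\det U^H\det U=1$), the eigenvalues of $\mathcal{L}_B$ are the diagonal entries $C_{ii}$, so $\det\mathcal{L}_B=\prod_iC_{ii}$ while $\det B=\det C$.

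The inequality $\det C\le\prod_iC_{ii}$ is exactly Hadamard's determinant inequality for positive definite matrices, and its sharp equality case---equality holds iff $C$ is diagonal---yields both the bound $\det B\le\det\mathcal{L}_B$ and the stated characterisation that equality is equivalent to $U^HBU$ being diagonal, i.e. to $U$ diagonalising $B$. The one step that is not a one-line evaluation of \eqref{eq:proj_expressionref} is invoking Hadamard's inequality together with its equality condition; this is the main point on which item 5 rests, whereas items 1, 2, 3 and 4 reduce to direct computation and the uniqueness of the projection.
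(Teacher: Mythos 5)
Your proposal is correct and, for item 5 (the only part the paper proves rather than cites), it follows exactly the paper's own argument: apply Hadamard's determinant inequality and its equality case to the pd matrix $U^HBU$, using $\det(U^HBU)=\det(B)$ and $\det(\mathcal{L}_B)=\prod_i (U^HBU)_{ii}$. For items 1--4 the paper simply refers to \cite{DFLZ} and \cite{DZ}, and your direct computations from \eqref{eq:proj_expressionref} (diagonal of $U^H\x\y^TU$, Rayleigh-quotient bounds on the $\u_i^HB\u_i$, uniqueness of the Frobenius-norm minimiser under conjugation, and cyclicity of the trace) are all valid substitutes for those citations.
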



\begin{proof}
	{For 1. see \cite{DFLZ}, for 2., 3. and 4. see {Propositions 5.2 in } \cite{DZ}. Concerning 5., let $A$ be a pd matrix. Then we have $\det {A}\leq \prod_{i=1}^na_{ii}$ (Hadamard inequality, see \cite{horn2ndmatrix}), and $\det(A)=\prod_{i=1}^na_{ii}$ if and only if $A$ is diagonal (see Theorem 7.8.1 \cite{horn2ndmatrix} ). In order to obtain 5. it is sufficient to apply these remarks to the pd matrix $U^HBU$. In fact, we have 
		\begin{equation*}
		\det(B)=\det(U^HBU)\leq \prod_{i=1}^n(U^HBU)_{ii}=\det(\mathcal{L}_B)
		\end{equation*} 
		and equality holds if and only if $U^HBU$ is diagonal.}
\end{proof}
\noindent The properties 4. and 5. of Lemma~\ref{repspecprop} will be crucial to state the conditions \eqref{traceconvergentsecantineq} and \eqref{determinantconvergentsecantineq}, for the convergence of the new method (see Theorem~\ref{Sconvergence}).

For a more exhaustive treatment of the contents of Lemma~\ref{repspecprop}, and its relevance for $\mathcal{L}^{(k)}$QN minimizations algorithms and optimal preconditioning of linear systems, one can see \cite{DFLZ}  and \cite{DZ}.
{Even if in the following sections we will use real unitary matrices $U$, in many situations the transform $U$ that diagonalizes matrices of $\mathcal{L}$, is defined on $\mathbb{C}$. This is the typical case of circulant matrices, where $U$ is the Fourier transform.  Then, to maintain a suitable degree of generality, the notation $U^H$ is necessary instead of $U^T$, and \textit{partial} results of the computational process, implicit in the iteration step 
	$B_k=\Phi(\mathcal{L}^{(k)}_{B_k}, \s_k, \y_k, \phi)$ (see Algorithm \ref{convergentlkqn_no_fq}), will be complex numbers. This does not compromise the fact that in each instruction the \textit{final} numerical results  are real. However, in this paper we will consider just real transforms $U$, so we will exchange the word `unitary' with the word `orthogonal' and the superscript `$H$' (Hermitian) with the superscript `$T$' (transpose) from the next section on.\\
	The algebras $\mathcal{L}$ considered in this article will be of low complexity, i.e., the matrix vector product $A\x$, for $A \in \mathcal{L}$, will be computable in a number of operations which grows slower than $O(n^2)$.}

\subsection{Broyden Class-type methods} \label{themetods}
Let us consider a function $f\, :\, \mathbb{R}^n \rightarrow \mathbb{R}$ where $n \geq 2$.

\noindent In this paper we will study the following class of minimization methods obtained by generalizing the Broyden Class methods considered in \cite{BNY}:

\IncMargin{1em}
\begin{algorithm}[H]
	\LinesNumbered
	
	\SetKwData{Left}{left}\SetKwData{This}{this}\SetKwData{Up}{up}
	
	\SetKwFunction{Union}{Union}\SetKwFunction{FindCompress}{FindCompress}
	
	\SetKwInOut{Input}{input}\SetKwInOut{Output}{output}
	
	\KwData{$\x_0\,\in \mathbb{R}^n, \,\g_0=\nabla f(\x_0),\, {\tilde{B}_0}$ pd, ${B_0}$ pd, $\mathbf{d}_0=-B_0^{-1}\g_0$, $k=0$; \\}
	
	\While{$\g_k\neq 0$}{

		$\x_{k+1}=\x_k+\lambda_k \d_k$ \tcc*[r]{$\lambda_k$ verifies conditions \eqref{AG1}, \eqref{AG2}}
		
		$\s_k=\x_{k+1}-\x_k $\;
		$\g_{k+1}=\nabla f(\x_{k+1})$\;
		
		$\y_k=\g_{k+1}-\g_k $\;
		
		$B_{k+1}= \Phi(\tilde{B}_k, \s_k,\y_k, \phi)$ \;
		$
				\left\{ 
				\begin{array}{c}
				\hbox{Define } \tilde{B}_{k+1} \hbox{ pd, set } \mathbf{d}_{k+1}=-\tilde{B}_{k+1}^{-1}\g_{k+1} \;\;\;(\mathcal{NS}) \; \\ 
				\hbox{Set } \mathbf{d}_{k+1}=-B_{k+1}^{-1}\g_{k+1}, \; \hbox{ define } \tilde{B}_{k+1} \hbox{ pd } \;\;\;(\mathcal{S})\; 
				\end{array}
				\right.
			$\;
		Set $k:=k+1$ \;	}\caption{Broyden Class-type}\label{gBc}
\end{algorithm}\DecMargin{1em}

\noindent	where $\tilde{B}_k$ is an approximation of $B_k$ and the updating formula is the Broyden's one applied to $\tilde{B}_k$, i.e.
\begin{equation}\label{Broydenupdate}
\Phi(\tilde{B}_k, \s_k,\y_k, \phi):=\tilde{B}_k- \frac{\tilde{B}_k\s_k\s_k^T\tilde{B}_k}{\s_k^T\tilde{B}_k\s_k}+\frac{\y_k\y_k^T}{\y_k^T\s_k}+\phi \, \s_k^T\tilde{B}_k\s_k\v_k\v_k^T.
\end{equation}
In \eqref{Broydenupdate} the vector $\v_k$ is defined by
$$\v_k=\frac{\y_k}{\y_k^T\s_k}-\frac{\tilde{B}_k\s_k}{\s_k^T\tilde{B}_k\s_k}$$ 

\noindent	and $\phi$ is a non negative parameter so that $\Phi(\tilde{B}_k, \s_k,\y_k, \phi)$ is pd whenever $\tilde{B}_k$ is pd and $\y_k^T\s_k>0$.

{ For $\phi \in [0,1]$ we call the Broyden Class-type family  ``restricted''.
	If $\tilde{B}_k=B_k$ for all $k$, then for $\phi=0$ and $\phi=1$ one obtains, respectively, the $BFGS$ and the DFP method {\cite{numopt}}.}

We assume that the step-length parameter $\lambda_k$ is chosen by an inexact line search satisfying the  {Wolfe} conditions
\begin{equation}\label{AG1}
f(\x_k+\lambda_k\mathbf{d}_k)\leq f(\x_k)+\alpha \lambda_k \g_k^T\mathbf{d}_k 
\end{equation} 
\begin{equation} \label{AG2}
g(\x_k+\lambda_k\mathbf{d}_k)^T\mathbf{d}_k\geq \beta \g_k^T\mathbf{d}_k 
\end{equation} 
where $0<\alpha < 1/2$ and $ \alpha < \beta < 1$. Condition \eqref{AG2} implies $\y_k^T\s_k>0$.\\

\noindent Let us observe that in the $\mathcal{S}$ case of Algorithm \ref{gBc},   the matrices generating the search directions $\mathbf{d}_{k+1}$ satisfy the Secant Equation $B_{k+1}\s_k=\y_k $. Instead, in the $\mathcal{NS}$ case such property is not necessarily fulfilled, i.e., in general, $\tilde{B}_{k+1}\s_k \neq \y_k.$

In the following three remarks we collect some useful properties we will use in Section \ref{sectionsconvergence}.\\

\begin{remark} \label{tracebelowbound}
	Observe that
	\begin{equation}\label{trace}
	\begin{split}
	\tr(B_{k+1})=\tr(\Phi(\tilde{B}_k, \s_k,\y_k, \phi))= \tr(\tilde{B}_k)+ \frac{\|\y_k\|^2}{\y_k^T\s_k}+\phi\frac{\|\y_k\|^2}{\y_k^T\s_k}\frac{\s_k^T\tilde{B}_k\s_k}{\y_k^T\s_k} \\ -(1-\phi)\frac{\|\tilde{B}_k\s_k\|^2}{\s_k^T\tilde{B}_k\s_k}-2\phi\frac{\y_k^T\tilde{B}_k\s_k}{\y_k^T\s_k}.
	\end{split}
	\end{equation}

	\noindent	Since $\phi \, \s_k^T\tilde{B}_k\s_k \geq 0$, the last term in \eqref{Broydenupdate} increases the eigenvalues {of the previous part of the update}, and hence
	\begin{equation}\label{equ:bc_bfgs_determinant}
	\det(B_{k+1}) \geq \det(\tilde{B}_k- \frac{\tilde{B}_k\s_k\s_k^T\tilde{B}_k}{\s_k^T\tilde{B}_k\s_k}+\frac{\y_k\y_k^T}{\y_k^T\s_k})=\det(\tilde{B}_k)\frac{\y_k^T\s_k}{\s_k^T\tilde{B}_k\s_k}
	\end{equation}
	(for the last equality see \cite{numopt}).
\end{remark}

\begin{remark}\label{AG2bis}
	From \eqref{AG2} it follows that, {using definitions in Algorithm \ref{gBc}},
	\begin{equation} \label{yslowerbound}
	\y_k^T\s_k=\g_{k+1}^T\s_k-\g_{k}^T\s_k\geq -(1-\beta)\g_k^T\s_k
	\end{equation}
	from which we obtain
	\begin{equation}
	\frac{\s_k^T\tilde{B}_k\s_k}{\y_k^T\s_k} \leq \frac{\s_k^T\tilde{B}_k\s_k}{(1-\beta)(-\g_k^T\s_k)}=\frac{\lambda_k}{1-\beta}
	\end{equation}
	($\s_k^T\tilde{B}_k\s_k=\s_k^T(-\lambda_k\g_k)$ in the $\mathcal{NS} $ case) and
	\begin{equation}
	\frac{\s_k^T{B}_k\s_k}{\y_k^T\s_k} \leq \frac{\s_k^T{B}_k\s_k}{(1-\beta)(-\g_k^T\s_k)}=\frac{\lambda_k}{1-\beta}
	\end{equation}
	($\s_k^T{B}_k\s_k=\s_k^T(-\lambda_k\g_k)$ in the $\mathcal{S} $ case).
\end{remark}

\begin{remark} \label{zeroconvergencesg}
	Let us define $f_*$ to be the infimum of $f$. 
	Using \eqref{AG1} we have (in both $\mathcal{NS}$ and $\mathcal{S}$ methods)
	\begin{equation}
	\begin{split}\sum_{k=0}^{N} \s_k^T(-\g_k)=\sum_{k=0}^{N} -\lambda_k\mathbf{d}_k^T\g_k \\
	\leq \frac{1}{\alpha}\sum_{k=0}^{N} [f(\x_k)-f(\x_{k+1})] \\
	\leq \frac{1}{\alpha}[f(\x_0)-f_*] <\infty.
	\end{split}
	\end{equation}
	Then the sum converges for $n \rightarrow +\infty$,	from which we obtain 
	$$\lim_{k \to +\infty}\s_k^T(-\g_k)=0.$$
\end{remark}

\subsection{{Assumptions for the function $f$}} \label{assumptionsection}
In Section \ref{sectionsconvergence}, in order to obtain a convergence result for the Broyden Class-type, we will {do the following}:
\begin{assumption} \label{assumprionf1}
	The level set $$D=\{\x\,\in\,\mathbb{R}^n\, : f(\x)\leq f(\x_0)\}$$ is convex, the function $f(\x)$  is  twice continuously differentiable, convex and bounded below in $D$ and the
	Hessian matrix is bounded in $D$, i.e.
	\begin{equation} \label{hessianbound}
	\|G(\x)\| \leq M,  {\hbox{ being } M \hbox{ a positive contant.}}
	\end{equation}
\end{assumption}

\begin{remark} \label{genassumption} 
	Observe that the condition \eqref{hessianbound} could be replaced by uniform convexity of $f(\x)$ and Lipschitz condition on $G(\x)$. Moreover, if Assumption \ref{assumprionf1} is fulfilled, then the following boundedness condition on the the Powell's ratio  ${\|\y_k\|^2}/{\s_k^T\y_k}$ \cite{P}  holds:
	
	\begin{equation} \label{ratiobound}
	\frac{\|\y_k\|^2}{\s_k^T\y_k} \leq M.
	\end{equation}  	
	{where $\s_k$, $\y_k$ are the difference vectors produced by Algorithm \ref{gBc}.}
	In fact, if we define (see \cite{BNY}, \cite{numopt}) the pd matrix
	\begin{equation} \label{eq:mean_hessian}
	\overline{G}=\int_0^1G(\x_k+\tau \s_k)d\tau,
	\end{equation}
	then we have from standard analysis results,
	\begin{equation} \label{hessiansecant}
	\y_k=\overline{G}\s_k
	\end{equation}
	and hence if $\z_k=\overline{G}^{\frac{1}{2}}\s_k$,
	\begin{equation*} 
	\frac{\|\y_k\|^2}{\s_k^T\y_k}=\frac{\s_k^T\overline{G}^2\s_k}{\s_k^T\overline{G}\s_k}= \frac{\z_k^T\overline{G}\z_k}{\z_k^T\z_k}\leq \sup_{\tau\;\in\;[0,1]} \|G(\x_k+\tau \s_k)\| \leq M.
	\end{equation*}	
\end{remark}
\noindent	We recall that condition \eqref{ratiobound}  is  typically used to prove  the global convergence of $BFGS$ method  \cite{P} and of  $\mathcal{L}QN$ methods \cite{DFLZ}.
{Observe that, if one could impose the \textit{discrete} convexity condition \eqref{ratiobound} by a suitable line-search, the  convergence results in the following sections would hold under the weaker assumptions $f \in C^1$ and bounded below.}

\section{Conditions for the convergence of the $\mathcal{S}$ecant and $\mathcal{N}$on $\mathcal{S}$ecant Broyden Class-type} \label{sectionsconvergence}
The matrices which generate the descent directions in the $\mathcal{S}$ case  exhibit explicitly  second order information (or, in other words, they satisfy the secant equation). Moreover, in contrast with the limited memory versions of Quasi-Newton methods, they store, in an approximate way, the second order information generated in all the previous steps of the algorithm.
In this section we will prove that both $\mathcal{S}$ and $\mathcal{NS}$ versions of Algorithm 1 are convergent if $\tilde{B}_k$ is suitably chosen.

\noindent{Now, using techniques and ideas developed in \cite{BNY,byrd1989tool}, we state the following 
	result which generalizes to the Broyden class of updating formulas \cite{BNY} what 
	has been proved in \cite{CDTZ} for $BFGS$-type $\mathcal{S}$ methods.}

\begin{theorem} \label{Sconvergence}
	If the $\mathcal{S}$ version  of Algorithm~\ref{gBc} with $\phi\;\in\;[0,1)$  is applied to a function that satisfies Assumption {\ref{assumprionf1}} and $\tilde{B}_k$ is chosen such that

	\begin{equation} \label{traceconvergentsecantineq}
	\tr \tilde{B}_k \leq \tr B_k
	\end{equation}
	\begin{equation} \label{determinantconvergentsecantineq}
	\det \tilde{B}_k \geq \det B_k
	\end{equation}
	\begin{equation} \label{directionpreservingineq}
	\frac{||B_k\s_k||^2}{(\s^T_kB_k\s_k)^2} \leq \frac{||\tilde{B}_k\s_k||^2}{(\s_k^T\tilde{B}_k\s_k)^2} \,.
	\end{equation}   
	
	\noindent 	for all $k$, then
	\begin{equation} \label{liminf}
	\liminf_{k \to \infty} \|\g_k\|=0
	\end{equation}	
	for any starting point $\x_0$ and any {pd} matrix $B_0$.
\end{theorem}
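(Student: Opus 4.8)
The plan is to run the classical Powell--Byrd--Nocedal trace/determinant argument (\cite{P,BNY,byrd1989tool}), but carried out on the auxiliary matrices $\tilde{B}_k$ rather than on $B_k$, so that the three hypotheses \eqref{traceconvergentsecantineq}--\eqref{directionpreservingineq} can be inserted exactly where the argument needs them. First I would reduce \eqref{liminf} to a statement about search directions. In the $\mathcal{S}$ case $\d_k=-B_k^{-1}\g_k$, so $B_k\s_k=-\lambda_k\g_k$ and the cosine of the angle $\theta_k$ between $\d_k$ and $-\g_k$ equals $\cos\theta_k=\s_k^TB_k\s_k/(\|\s_k\|\,\|B_k\s_k\|)$. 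Since the Hessian is bounded (Assumption~\ref{assumprionf1}) the gradient is Lipschitz, so the Wolfe conditions \eqref{AG1}--\eqref{AG2} yield Zoutendijk's inequality $\sum_k\cos^2\theta_k\,\|\g_k\|^2<\infty$. Hence it suffices to exhibit a subsequence $\{k_j\}$ along which $\cos\theta_{k_j}\ge\delta>0$: Zoutendijk then forces $\|\g_{k_j}\|\to0$, giving \eqref{liminf}.

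The device that produces such a subsequence is the potential $\psi(A)=\tr A-\ln\det A$, which is positive on pd matrices and whose boundedness confines the spectrum to a compact subinterval of $(0,\infty)$. I would apply the trace identity \eqref{trace} and the determinant bound \eqref{equ:bc_bfgs_determinant} to $B_{k+1}=\Phi(\tilde{B}_k,\s_k,\y_k,\phi)$, obtaining an upper estimate for $\psi(B_{k+1})$ in terms of quantities built from $\tilde{B}_k$. Writing $\tilde q_k=\s_k^T\tilde{B}_k\s_k/\|\s_k\|^2$, $m_k=\y_k^T\s_k/\|\s_k\|^2$ and letting $\tilde\theta_k$ be the angle between $\s_k$ and $\tilde{B}_k\s_k$, the surviving negative contribution is $-(1-\phi)\,\|\tilde{B}_k\s_k\|^2/(\s_k^T\tilde{B}_k\s_k)=-(1-\phi)\,\tilde q_k/\cos^2\tilde\theta_k$; this is where the restriction $\phi\in[0,1)$ is essential, since it keeps a strictly negative multiple of this term. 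Hypotheses \eqref{traceconvergentsecantineq} and \eqref{determinantconvergentsecantineq} enter next: together they give $\psi(\tilde{B}_k)\le\psi(B_k)$, which lets me replace $\tilde{B}_k$ by $B_k$ in the inherited part and telescope, so that $\psi(B_{k+1})\le\psi(B_k)+(\text{per-step increment})$.

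The increment comprises the Powell ratio $\|\y_k\|^2/(\y_k^T\s_k)\le M$ from \eqref{ratiobound}; the logarithmic determinant term, which after the elementary inequality $1-t+\ln t\le0$ (used with $t=\tilde q_k/\cos^2\tilde\theta_k$) collapses to a multiple of $\ln\cos^2\tilde\theta_k$ plus bounded leftovers; and the $\phi$-bearing DFP terms, which I would estimate as in \cite{BNY} via \eqref{ratiobound} and the step-length bounds of Remark~\ref{AG2bis}. Summing over $k$ and using $\psi(B_{N+1})>0$ gives $\sum_{k=0}^N\bigl(-\ln\cos^2\tilde\theta_k\bigr)\le\psi(B_0)+(\text{controlled sum})$, so an averaging argument shows that $-\ln\cos^2\tilde\theta_k$ cannot be large for a positive fraction of indices, producing a subsequence with $\cos\tilde\theta_{k_j}\ge\delta>0$. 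Finally, hypothesis \eqref{directionpreservingineq}, which is exactly the statement $\cos^2\theta_k\ge\cos^2\tilde\theta_k$, transfers this bound to the genuine search direction, giving $\cos\theta_{k_j}\ge\delta$ and hence \eqref{liminf} by Zoutendijk.

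The hard part will be controlling the per-step increment under Assumption~\ref{assumprionf1} alone, i.e.\ \emph{without} uniform convexity. The term $-\ln m_k$ generated by the determinant is not bounded above when $m_k=\y_k^T\s_k/\|\s_k\|^2\to0$, so the clean ``bounded increment'' estimate of the uniformly convex case is unavailable; this is precisely why only \eqref{liminf}, rather than $\lim_k\|\g_k\|=0$, is asserted, and it forces the Powell-style refinement \cite{P}---run by contradiction, assuming $\|\g_k\|\ge\gamma>0$ for all $k$ and playing the divergence of $\sum(-\ln\cos^2\tilde\theta_k)$ against the function-value decrease of Remark~\ref{zeroconvergencesg}---in place of the naive averaging. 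A secondary difficulty, specific to the Broyden (rather than $BFGS$) setting, is the uniform control of the positive $\phi$-terms in \eqref{trace}, notably $\phi\,(\|\y_k\|^2/\y_k^T\s_k)(\s_k^T\tilde{B}_k\s_k/\y_k^T\s_k)$; here I would exploit the convex-combination structure $\Phi(\cdot,\phi)=(1-\phi)\Phi(\cdot,0)+\phi\,\Phi(\cdot,1)$ of the restricted Broyden class together with Remark~\ref{AG2bis}, following \cite{BNY,byrd1989tool}.
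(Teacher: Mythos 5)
Your proposal is correct and follows essentially the same route as the paper: the paper also runs the Powell--Byrd--Nocedal trace/determinant argument by contradiction under $\|\g_k\|\geq\gamma>0$, uses \eqref{traceconvergentsecantineq}--\eqref{determinantconvergentsecantineq} to telescope trace and determinant (your additive $\psi(\tilde B_k)\leq\psi(B_k)$ appears there in product form, as the linear trace bound \eqref{eq:referee_added} and the polynomial determinant bound \eqref{eqq4 : nw} obtained via the arithmetic--geometric mean inequality), and exploits \eqref{directionpreservingineq} exactly as your $\cos^2\theta_k\geq\cos^2\tilde\theta_k$, with Remark~\ref{zeroconvergencesg} playing the role you assign to Zoutendijk. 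The only point to sharpen is the one you flag yourself: the per-step quantity that diverges under the contradiction hypothesis is the \emph{combination} $m_k/\cos^2\tilde\theta_k$ (the paper's factor $(1-\phi-\psi_k\phi)\|\g_k\|^2/\s_k^T(-\g_k)$ in \eqref{eqq7 : nw}), not $-\ln\cos^2\tilde\theta_k$ alone, and it is precisely by keeping $\ln m_k$ and $-\ln\cos^2\tilde\theta_k$ together that the uncontrolled $-\ln m_k$ term is absorbed.
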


\noindent The main idea to prove Theorem~\ref{Sconvergence} is to compare the third and fifth term of \eqref{trace}. Let us define $\psi_k$  {as}

\begin{equation}\label{psiequation}
{
	\psi_k:= 
	\big [{\frac{\|\y_k\|^2}{\y_k^T\s_k}\frac{\s_k^T\tilde{B}_k\s_k}{\y_k^T\s_k} -2\frac{\y_k^T\tilde{B}_k\s_k}{\y_k^T\s_k}}\big ]{\frac{\s_k^T\tilde{B}_k\s_k}{\|\tilde{B}_k\s_k\|^2}}}
\end{equation}
so that  \eqref{trace} becomes
\begin{equation} \label{psitraceequation}
\tr(B_{k+1})= \tr(\tilde{B_k})+ \frac{\|\y_k\|^2}{\y_k^T\s_k} -(1-\phi-\psi_k \phi)\frac{\|\tilde{B}_k\s_k\|^2}{\s_k^T\tilde{B}_k\s_k}.
\end{equation}


\noindent In the following remarks we state upper bounds for the addends appearing in~\eqref{psiequation}. 

\begin{remark}\label{firsttermpsidis}
	\begin{equation} \label{eq:firsttermpsidis}
	\begin{split}
	\frac{\|\y_k\|^2}{\y_k^T\s_k}\frac{\s_k^T\tilde{B}_k\s_k}{\y_k^T\s_k}{\frac{\s_k^T\tilde{B}_k\s_k}{\|\tilde{B}_k\s_k\|^2}} 
	\leq M \frac{(\s_k^T\tilde{B}_k\s_k)^2}{\y_k^T\s_k\|\tilde{B}_k\s_k\|^2} \\
	\leq M \frac{(\s_k^T{B}_k\s_k)^2}{\y_k^T\s_k\|{B}_k\s_k\|^2}
	=\frac{M(\s_k^T(-\g_k))^2}{\y_k^T\s_k\|-\g_k\|^2} \\
	\leq \frac{M(\s_k^T(-\g_k))}{(1-\beta)\|-\g_k\|^2},
	\end{split}
	\end{equation}
	where first inequality follows using \eqref{ratiobound}, the second using \eqref{directionpreservingineq} and last inequality follows using \eqref{yslowerbound}.
\end{remark}
\begin{remark}\label{secondtermpsidis}
	\begin{equation} \label{eq:secondtermpsidis}
	\begin{split}
	\frac{|\y_k^T\tilde{B}_k\s_k|}{\y_k^T\s_k}{\frac{\s_k^T\tilde{B}_k\s_k}{\|\tilde{B}_k\s_k\|^2}} \leq \frac{\|\y_k\|\s_k^T\tilde{B}_k\s_k}{\y_k^T\s_k\|\tilde{B}_k\s_k\|} \\
	\leq \frac{\sqrt{M}\s_k^T\tilde{B}_k\s_k}{\sqrt{\y_k^T\s_k}\|\tilde{B}_k\s_k\|}\\
	\leq \frac{\sqrt{M}\s_k^T{B}_k\s_k}{\sqrt{\y_k^T\s_k}\|{B}_k\s_k\|}
	= \frac{\sqrt{M}(\s_k^T(-\g_k))}{\sqrt{\y_k^T\s_k}\|-\g_k\|} \\
	\leq \frac{\sqrt{M(\s_k^T(-\g_k))}}{\sqrt{1-\beta}\|-\g_k\|},
	\end{split}
	\end{equation}
	where the first inequality follows from Cauchy-Schwarz inequality, the second from \eqref{ratiobound}, the third from \eqref{directionpreservingineq}, the fourth from \eqref{yslowerbound}.
\end{remark}
We can now prove Theorem~\ref{Sconvergence}.
\begin{proof}
	Arguing by contradiction, let us assume  $\|\g_k\|$ bounded away from zero, i.e., there exists $\gamma >0$ such that
	\begin{equation} \label{absurdhyp}
	\|\g_k\|\geq \gamma >0.
	\end{equation}
	From Remark \ref{zeroconvergencesg}  we obtain  \begin{equation}\label{absurdconsequence}
	\lim_{k \to \infty}\frac{\s_k^T(-\g_k)}{\|-\g_k\|^2} =0.
	\end{equation}
	Now we show that \eqref{absurdconsequence} leads to a contradiction, thus \eqref{absurdhyp} cannot hold.
	From \eqref{psiequation}, using Remark \ref{firsttermpsidis}, Remark \ref{secondtermpsidis} and \eqref{absurdconsequence} we obtain
	\begin{equation} \label{psitendtozero}
	\lim_{k \to \infty} \psi_k=0.
	\end{equation}
	Using \eqref{psitendtozero}, since $\phi \in [0, 1)$, we have that there exist an index $s$ and constants $l_1>0,\;l_2>0$ such that 
	\begin{equation} \label{ineq:phi_psi}
	l_2 \geq (1-\phi-\psi_k \phi) \geq l_1 >0\; \hbox{ for all } \;k \geq s.
	\end{equation}
	Then we can write (for $j \geq s$), {using \eqref{psitraceequation}},
	\begin{equation}\label{eqq2 : nw}
	{\rm tr} B_{j+1}\leq {\rm tr} B_s + \sum_{k=s}^{j}\frac{1}{\y_k^T\s_k}\|\y_k\|^2-\sum_{k=s}^{j}\frac{1}{\s_k^T\tilde B_k \s_k}\|\tilde B_k\s_k\|^2(1-\phi-\psi_k \phi),
	\end{equation}
	and hence
	
	\begin{equation} \label{eq:referee_added}
	{\rm tr} B_{j+1}\leq {\rm tr} B_s + \sum_{k=s}^{j}\frac{1}{\y_k^T\s_k}\|\y_k\|^2\leq {\rm tr} B_s + M(j+1-s) \leq c_1 (j+2-s)
	\end{equation}
	
	where $c_1=\max\{\tr B_s, M\}$ (the trace grows at most linearly for all $j \geq s$).
	
	Let us remember that, given $n$ real positive numbers $a_i$, it holds:
	\begin{equation}\label{ag : nw}
	\prod_{i=1}^{n} a_i\leq \bigg (\frac{\sum_{i=1}^na_i}{n} \bigg )^n
	\end{equation}
	from which we obtain:
	\begin{equation}\label{eqq4 : nw}
	\det B_{j+1}=\prod_{i=1}^n \lambda_i(B_{j+1}) \leq \bigg (\frac{\sum_{i=1}^n\lambda_i(B_{j+1})}{n} \bigg )^n \leq \bigg (\frac{c_1(j+2-s)}{n} \bigg)^n \ .
	\end{equation}
	Let us note, moreover, that from (\ref{eqq2 : nw}) and \eqref{eq:referee_added}, since $B_{j+1}$ is  {pd}, we have:
	\begin{equation}\begin{split}
	\sum_{k=s}^{j}\frac{1}{\s_k^T\tilde B_k \s_k}\|\tilde B_k\s_k\|^2 (1-\phi-\psi_k \phi)\leq  {\rm tr} B_s - {\rm tr} B_{j+1} +  \sum_{k=s}^{j}\frac{1}{\y_k^T\s_k}\|\y_k\|^2 \\
	\leq  {\rm tr} B_s + \sum_{k=s}^{j}\frac{1}{\y_k^T\s_k}\|\y_k\|^2 \leq c_1(j+2-s)
	\end{split}
	\end{equation}
	and applying once more (\ref{ag : nw}) we have:
	\begin{equation}\label{eqq5 : nw}
	\prod_{k=s}^{j}\frac{1}{\s_k^T\tilde B_k \s_k}\|\tilde B_k\s_k\|^2 (1-\phi-\psi_k \phi)\leq (2c_1)^{j+1-s}.
	\end{equation}
	
	\noindent From \eqref{equ:bc_bfgs_determinant} and \eqref{determinantconvergentsecantineq} we have:
	$$
	\det B_{j+1} \geq \frac{\s_j^T\y_j}{\s_j^T\tilde B_j \s_j}\det \tilde B_j \ge \frac{\s_j^T\y_j}{\s_j^T\tilde B_j \s_j} \det B_j, 
	$$
	from which we obtain:
	\begin{equation}\label{eqq6 : nw}
	\prod_{k=s}^{j}\frac{\s_k^T\y_k}{\s_k^T\tilde B_k \s_k}\leq \frac{\det B_{j+1}}{\det B_s} \ .
	\end{equation}
	From \eqref{yslowerbound} we have
	$$
	(1-\beta)^{j+1-s} \leq \prod_{k=s}^{j}\frac{\s_k^T\y_k}{-\g_k^T\s_k},
	$$
	and hence, by the equality $B_k\s_k=-\lambda_k\g_k$ and by \eqref{directionpreservingineq},  \eqref{eqq4 : nw}, \eqref{eqq5 : nw}, \eqref{eqq6 : nw},
	\begin{equation} \label{ineqintheproof} 
	\begin{split}
	(1-\beta)^{j+1-s}\prod_{k=s}^{j}\frac{\|\g_k\|^2}{\s_k^T(-\g_k)}(1-\phi-\psi_k \phi) \\\leq \prod_{k=s}^{j}(1-\phi-\psi_k \phi)\frac{\|-\lambda_k\g_k\|^2}{\s_k^T(-\lambda_k\g_k)}\frac{\s_k^T\y_k}{\s_k^T(-\lambda_k\g_k)}\\
	=\prod_{k=s}^{j}(1-\phi-\psi_k \phi)\frac{\|B_k\s_k\|^2}{\s_k^T B_k\s_k}\frac{\s_k^T\y_k}{\s_k^T\ B_k\s_k}\\
	\leq \prod_{k=s}^{j}(1-\phi-\psi_k \phi)\frac{\|\tilde B_k\s_k\|^2}{\s_k^T\tilde B_k\s_k}\frac{\s_k^T\y_k}{\s_k^T\tilde B_k\s_k}\\
	\leq (2c_1)^{j+1-s}\bigg(\frac{c_1(j+2-s)}{n}\bigg)^n\frac{1}{\det B_s},
	\end{split}
	\end{equation}
	i.e.,
	\begin{equation}\label{eqq7 : nw} 
	\prod_{k=s}^{j}(1-\phi-\psi_k \phi)\frac{\|\g_k\|^2}{\s_k^T(-\g_k)}\leq c_2^{j+1-s} \hbox{ for all } j\geq s,
	\end{equation}
	{for a suitable constant $c_2$ dependent on $s$ and $M$ (defined in Assumption \ref{assumprionf1}). For the details see Appendix 2.}
	
	\noindent On the other hand, by \eqref{absurdconsequence} and by the bound $1-\phi-\psi_k\phi \geq l_1 > 0$  in \eqref{ineq:phi_psi},
	we have that the ratios $(1-\phi-\psi_k\phi)\|\g_k\|^2/\s_k^T(-\g_k)$ go to $+\infty$, as $k\to +\infty$; thus a natural number $j^*\geq s$ must exist such that
	
	\begin{equation}\label{eqq7 : nw_2}
	\tag{$\ref{eqq7 : nw}_1$}
	\prod_{k=s}^j (1-\phi-\psi_k\phi)\mathbf{f}{ \|\g_k\|^2 }{ \s_k^T(-\g_k) }
	> c_2^{j+1-s},
	\ \ \forall\ j\geq j^* ,
	\end{equation}
	({see again Appendix 2 for more details}) but this contradicts \eqref{eqq7 : nw} {choosing $j \geq \max\{s,j^*\}$}.  We have hence proved that \eqref{liminf} holds.
\end{proof}
\noindent The condition \eqref{directionpreservingineq} is satisfied, in particular, when $\tilde{B}_k$ is such that
\begin{equation} \label{directionpreserving}
\tilde{B}_k\s_k=B_{k}\s_k.
\end{equation} 
{In the following the above equality has  a crucial role. As it is clear from Algorithm \ref{gBc}, the equality \eqref{directionpreserving} 
	regards the basic relationship between the search directions produced by $\mathcal{S}$ and $\mathcal{NS}$ algorithms. In fact, if equality \eqref{directionpreserving} holds, such search directions  are perfectly equivalent even if $B_k \neq \tilde{B}_k$. To prove the convergence property of the $\mathcal{S}$ scheme we have exploited the condition \eqref{directionpreservingineq}, which is fulfilled if \eqref{directionpreserving} is fulfilled.}\\
\noindent In the next Sections \ref{selfcorrectinanalysis} and \ref{constructionSconvegence} we will investigate some further
consequences of condition \eqref{directionpreserving} and we will prove that it can be imposed
by choosing $\tilde{B}_k$ as the projection of $B_k$ onto algebras of matrices diagonalized by {a fixed, small number of orthogonal} Householder transforms.

The following result generalizes what proven in \cite{DFLZ} for BFGS-type $\mathcal{NS}$ methods. 
\begin{theorem}
	If the $\mathcal{NS}$ version  of Algorithm~\ref{gBc} with $\phi\;\in\;[0,1)$  is applied to a function that satisfies Assumption {\ref{assumprionf1}} and $\tilde{B}_k$ is chosen such that 
	\eqref{traceconvergentsecantineq} and \eqref{determinantconvergentsecantineq} hold for all $k$,	then
	\begin{equation}
	\liminf_{k \to \infty} \|\g_k\|=0
	\end{equation}	
	for any starting point $\x_0$ and any {pd} matrix $B_0$.
\end{theorem}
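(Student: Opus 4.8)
The plan is to follow the proof of Theorem~\ref{Sconvergence} almost verbatim, the only structural change being that every appeal to the direction-preserving inequality \eqref{directionpreservingineq} is replaced by the identity
\begin{equation*}
\tilde{B}_k\s_k=-\lambda_k\g_k ,
\end{equation*}
which holds automatically in the $\mathcal{NS}$ case because the search direction is $\d_k=-\tilde{B}_k^{-1}\g_k$ and $\s_k=\lambda_k\d_k$. This is the conceptual heart of the matter: in the $\mathcal{S}$ scheme the descent direction was generated by $B_k$, so the passage from $\tilde{B}_k$ to $B_k$ inside the estimates required the extra hypothesis \eqref{directionpreservingineq}; in the $\mathcal{NS}$ scheme the descent direction is generated by $\tilde{B}_k$ itself, so the quantity $\|\tilde{B}_k\s_k\|^2/(\s_k^T\tilde{B}_k\s_k)^2$ can be evaluated directly in terms of $\g_k$, and \eqref{directionpreservingineq} simply drops out of the list of assumptions.

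First I would argue by contradiction, assuming \eqref{absurdhyp}, namely $\|\g_k\|\geq\gamma>0$, and deduce \eqref{absurdconsequence} from Remark~\ref{zeroconvergencesg} exactly as before. Then I would re-establish the bounds of Remark~\ref{firsttermpsidis} and Remark~\ref{secondtermpsidis}: starting from \eqref{ratiobound} and the Cauchy--Schwarz inequality as in the $\mathcal{S}$ case, but in the step that previously invoked \eqref{directionpreservingineq} I would instead substitute $\tilde{B}_k\s_k=-\lambda_k\g_k$, so that $\s_k^T\tilde{B}_k\s_k=\lambda_k\s_k^T(-\g_k)$ and $\|\tilde{B}_k\s_k\|^2=\lambda_k^2\|\g_k\|^2$. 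The powers of $\lambda_k$ cancel and one recovers the very same upper bounds, $M\s_k^T(-\g_k)/\big((1-\beta)\|\g_k\|^2\big)$ and $\sqrt{M\,\s_k^T(-\g_k)}/\big(\sqrt{1-\beta}\,\|\g_k\|\big)$; together with \eqref{absurdconsequence} and the definition \eqref{psiequation} these give $\psi_k\to0$ as in \eqref{psitendtozero}, whence, since $\phi\in[0,1)$, the two-sided control \eqref{ineq:phi_psi} on $1-\phi-\psi_k\phi$ for all large $k$.

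The remaining block of the argument then transfers without modification, because it uses only the trace and determinant hypotheses. Feeding \eqref{traceconvergentsecantineq} into \eqref{psitraceequation} yields the linear growth of the trace and hence the determinant upper bound \eqref{eqq4 : nw}, the partial-product bound \eqref{eqq5 : nw}, and, through \eqref{equ:bc_bfgs_determinant} together with \eqref{determinantconvergentsecantineq}, the telescoped determinant lower bound \eqref{eqq6 : nw}. In the final chain \eqref{ineqintheproof} the first inequality, which introduces the factor $(1-\beta)^{j+1-s}$ via \eqref{yslowerbound}, is unchanged; the equality and the subsequent inequality (steps two and three of \eqref{ineqintheproof}), which in the $\mathcal{S}$ proof used $B_k\s_k=-\lambda_k\g_k$ and then \eqref{directionpreservingineq}, now collapse into the single identity $\|-\lambda_k\g_k\|^2/\s_k^T(-\lambda_k\g_k)=\|\tilde{B}_k\s_k\|^2/\s_k^T\tilde{B}_k\s_k$ obtained by inserting $\tilde{B}_k\s_k=-\lambda_k\g_k$, after which \eqref{eqq5 : nw}, \eqref{eqq6 : nw} and \eqref{eqq4 : nw} again produce \eqref{eqq7 : nw} with a constant $c_2$ depending only on $s$ and $M$. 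The contradiction is reached precisely as in Theorem~\ref{Sconvergence}: by \eqref{absurdconsequence} and the lower bound $1-\phi-\psi_k\phi\geq l_1>0$ the ratios $(1-\phi-\psi_k\phi)\|\g_k\|^2/\s_k^T(-\g_k)$ diverge to $+\infty$, so their product eventually exceeds $c_2^{j+1-s}$, contradicting \eqref{eqq7 : nw}; hence \eqref{absurdhyp} fails and $\liminf_{k\to\infty}\|\g_k\|=0$. I expect the only genuinely non-routine point to be the recognition that \eqref{directionpreservingineq} is exactly the relation the $\mathcal{NS}$ construction makes automatic, so that once the substitution $\tilde{B}_k\s_k=-\lambda_k\g_k$ is performed every inequality of the $\mathcal{S}$ proof that relied on \eqref{directionpreservingineq} degenerates to an identity after cancelling the $\lambda_k$ factors, and no new estimate is needed.
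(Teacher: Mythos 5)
Your proposal is correct and follows exactly the route the paper takes: the paper's own proof of this theorem is a one-line reduction to the proof of Theorem~\ref{Sconvergence}, observing that in the $\mathcal{NS}$ case $\tilde{B}_k\s_k=-\lambda_k\g_k$, so that \eqref{directionpreservingineq} is no longer needed to obtain Remark~\ref{firsttermpsidis}, Remark~\ref{secondtermpsidis} and \eqref{ineqintheproof}. Your write-up simply spells out in detail the same substitution and cancellation of the $\lambda_k$ factors.
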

\begin{proof} 
	Proceed as in the proof of Theorem~\ref{Sconvergence} noting that the hypothesis \eqref{directionpreservingineq} on $\tilde{B}_k$ is no longer necessary to obtain  Remark \ref{firsttermpsidis} (see \eqref{eq:firsttermpsidis}), Remark \ref{secondtermpsidis} (see \eqref{eq:secondtermpsidis}) and \eqref{ineqintheproof}, since in $\mathcal{NS}$ methods  $\tilde{B}_k\s_k$
	turns out to be equal to $-\lambda_k \g_k$. 
\end{proof}

\noindent	In Figure \ref{pictorialproperties} we illustrate in a pictorial way the restricted
Broyden Class-type $\mathcal{S}$ecant and $\mathcal{N}$on $\mathcal{S}$ecant methods satisfying
the conditions $\tr \tilde{B}_k \leq \tr B_k$, $\det \tilde{B}_k \geq \det B_k$ and {$f \in C^2$}, which appear basic in proving convergence
results for both classes of methods. At the moment only a subset
of the pictured $\mathcal{S}$ecant methods are certainly convergent, those
satisfying the surplus condition \eqref{directionpreservingineq}. 
\noindent In the following we will focus on Broyden Class-type methods
such that $\tilde{B}_k \s_k = B_k \s_k$, which form a subset of the intersection
between convergent $\mathcal{S}$ and $\mathcal{NS}$, with the aim to define new efficient $BFGS$-type algorithms. 

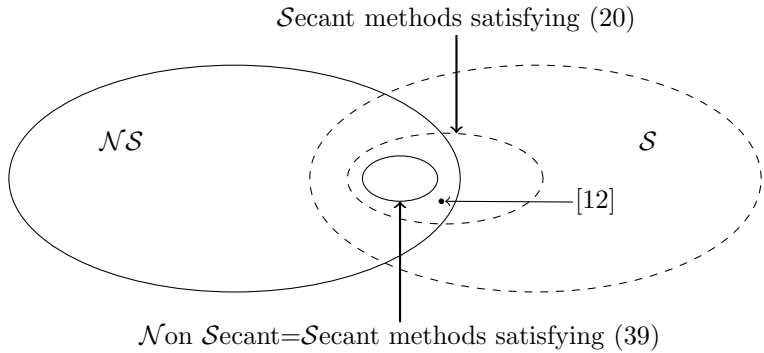
\begin{figure}[h] 
	\centering	
	\def\firstcircle{(0,0) ellipse (3 and 1.5)}
	\def\secondcircle{(4,0) ellipse (3 and 1.5)}
	\def\thirdcircle{(2.8,0) ellipse (1.3 and 0.6)}
	\def\fourthcircle{(2.2,0) ellipse (0.5 and 0.3)}
	\def\fifthcircle{(2.75,-0.3) ellipse (0.03 and 0.03)}
	\begin{tikzpicture}
	\draw \firstcircle  ;
	\draw[dashed] \secondcircle ;
	\draw[dashed] \thirdcircle ;
	\draw(-1.5,0.5) node {$\mathcal{NS}$};
	\draw(5.5,0.5) node {$\mathcal{S}$};
	\draw[<-,thick](2.95,0.6)--(2.95,1.9) ;
	\draw(2.95,2.1) node {$\mathcal{S}$ecant methods satisfying \eqref{directionpreservingineq} };
	\draw \fourthcircle;
	\draw \fifthcircle;
	\fill[black] \fifthcircle;
	\draw[<-,thick](2.2,-0.3)--(2.2,-1.9) ;
	\draw[<-](2.8,-0.3)--(4.5,-0.31) ;
	\draw(4.8,-0.3) node {\cite{CDTZ}} ;
	\draw(2.2,-2.1) node {$\mathcal{N}$on  $\mathcal{S}$ecant=$\mathcal{S}$ecant methods satisfying \eqref{directionpreserving}};
	
	\end{tikzpicture}	\caption{Restricted Broyden Class-type methods satisfying the conditions on trace, determinant.} \label{pictorialproperties}
\end{figure}

\section{Self correcting properties implied by convergence conditions} \label{selfcorrectinanalysis}
In this section, assuming $\phi=0$ in Algorithm \ref{gBc}, we will study how \eqref{directionpreserving}  reverberates on self correcting properties of the algorithm.

There are experimental evidences (in the case the matrix $\tilde{B}_k$ is chosen in some fixed matrix algebra $\mathcal{L}$), that the $\mathcal{S}$ version of Algorithm \ref{gBc} perform{s} better if compared with the $\mathcal{NS}$ one (see \cite{BDCFZ} and \cite{CCD}).
In this section we will try to motivate theoretically this experimental observation by comparing $\tr{B_{k+1}}$ and $\det{B_{k+1}}$ produced by classic $BFGS$ and Algorithm \ref{gBc} when $\phi=0$. 
Observe moreover, that in  \cite{CDTZ} some preliminary experimental experiences have shown that even if condition \eqref{directionpreserving} is imposed in an approximate way (i.e $\tilde{B}_k\s_k \approx B_k\s_k$) performances of Algorithm \ref{gBc} are competitive with those of $\mathcal{H}QN$, which, in turn, has been proved to be competitive with $L$-$BFGS$ on some neural networks problem (see \cite{DFLZ,BDCFZ}).

Finally let us stress the fact that, even if ``the Quasi-Newton updating is inherently an overwriting process rather than an averaging process" (see \cite{BHNS}), the following analysis will show how algorithms proposed in this work exhibit an interaction between averaging and overwriting phases more similar to $BFGS$ than to $L$-$BFGS$ (remember that the curvature information constructed by $BFGS$ are good enough to endow the algorithm with a superlinear rate of convergence, see \cite{numopt}).

Performing one step of the ``classic" $BFGS$, one has (see {\eqref{trace} and \eqref{equ:bc_bfgs_determinant}})

\begin{equation} \label{eq:ft_selfcorrecting}
\begin{split}
& B_{k+1}=\Phi(B_k, \s_k, \y_k,0) \\
&\tr B_{k+1}=\tr B_k {-\frac{\|B_k\s_k\|^2}{\s_k^TB_k\s_k}}+\frac{\|\y_k\|^2}{\y_k^T\s_k}
\end{split}
\end{equation}
\begin{equation} \label{eq:det_selfcorrecting}
\begin{split}
\det(B_{k+1})=\det ({B}_k) \frac{\y_k^T\s_k}{\s_k^TB_k\s_k}=\det ({{B}_k}) \frac{\s_k^T(\overline{G}\s_k)}{\s_k^TB_k\s_k},
\end{split}
\end{equation}
from which it is {possible to observe} that $BFGS$ (and all updates in the restricted Broyden class) ``have a strong self correcting property with respect to the determinant" (see \cite{BNY}). In particular curvatures of the model are inflated or deflated (and hence corrected) accordingly to the ratio $\frac{\s_k^T(\overline{G}\s_k)}{\s_k^TB_k\s_k},$
allowing the algorithm to compare the computed model with the true Hessian. In fact, {the previous} ratio is used to correct the spectrum of the operator defining the descent direction at next step.

On the contrary, by performing one step of Algorithm \ref{gBc}, we obtain {equations \eqref{eq:ft_selfcorrecting} and \eqref{eq:det_selfcorrecting}  where ${B}_k$ is replaced by $\tilde{B}_k$.}
It is {then} clear that if $\tilde{B}_k\s_k$ is not suitably chosen, then the ratio $\frac{\s_k^T(\overline{G}\s_k)}{\s_k^T\tilde{B}_k\s_k}$
could not exhibit a reasonable  behavior, making the algorithm not able to self-correct bad estimated curvatures and hence loosing efficiency.
{Hypothesis \eqref{directionpreserving} is hence further justified
	from the ``self-correcting properties point of view''.} 
Observe that if we choose $\tilde{B}_k=\mathcal{L}^{(k)}_{B_k}$, the error we introduce contributes to inappropriately inflate the curvatures of the model because by Lemma~\ref{repspecprop}, even if $\tr \tilde{B}_k=\tr B_k$, we have  $\det{\tilde{B}_k} \geq \det {B_k}$ (see \cite{liuwiel} and references therein for more information regarding the inappropriate inflations problems affecting $BFGS$).
Recall that by the same Lemma~\ref{repspecprop}, $\det{\tilde{B}_k} = \det {B_k}$ iff $U_k$ diagonalizes $B_k$. Thus, in order to reduce the inappropriate inflation of the curvatures of the model, $U_k$ should be chosen, in principle, besides of low complexity, as close as possible to a matrix which diagonalizes $B_k$. 

The problem concerning the possibility to exploit $\tilde{B}_k$ in order to improve such self correcting properties as much as possible remains open. Anyway, the relative weakness of the hypothesis of Theorem~\ref{Sconvergence} leaves room, in principle, for possible different choices of $\tilde{B}_k$, besides the specific choice considered in this work, which could improve self correcting property.

A quite natural choice of $\tilde{B}_k$, alternative to $\tilde{B}_k=\mathcal{L}^{(k)}_{B_k}$, can be $\tilde{B}_k=\sigma_k\mathcal{L}^{(k)}_{B_k}$ for a suitable $\sigma_k$, as considered in the following Remark \ref{remark:scaling} (see also \cite{al1993analysis,al1998global}).

\begin{remark} \label{remark:scaling}
	In section \ref{sec:numerical_results}, in order to mitigate the inappropriate inflation of the curvatures introduced by the projection operation,
	following a well known line of research \cite{al1993analysis,al1998global,nocedal1993analysis,oren1974self,andrei2018double}, we numerically investigate the introduction of a self-scaling factor $\sigma_k$, i.e., we will use $\tilde{B}_k=\sigma_k \mathcal{L}^{(k)}_{B_k}$.  More in detail, after the construction of the matrix algebra $\mathcal{L}^{(k)}$ such that $\mathcal{L}^{(k)}_{B_k} \s_k= {B}_k\s_k$ (see Section \ref{constructionSconvegence}, Line \ref{algline:B_kcomputation_no_qt} of Algorithm~\ref{convergentlkqn_no_fq} and Line \ref{algline:compute_proj2} of Algorithm~\ref{convergentlkqn}), 
	we  scale $\mathcal{L}^{(k)}_{B_k}$; in particular, we use  the updating formula
	
	\begin{equation} \label{eq:scaled_update}
	B_{k+1}=\Phi(\sigma_k \mathcal{L}^{(k)}_{B_{k}}, \s_k, \y_k,0)
	\end{equation}
	where
	$$\sigma_k:= \max\{ \min \{{\frac{\y_k^T\s_k}{\s_k^T\mathcal{L}^{(k)}_{B_k}\s_k}},1 \}, (\det(B_k)/\det(\mathcal{L}^{(k)}_{B_k}))^{1/n}\}.$$ Such choice of $\sigma_k$ guarantees that all the hypothesis of Theorem  \ref{Sconvergence} are satisfied. Moreover, as $\sigma_k \leq 1$ for all $k$, we have
	\begin{equation*}
	\det(B_{k+1})=\det(\sigma_k \mathcal{L}^{(k)}_{B_{k}}) \frac{\y_k^T\s_k}{\s_k^T\sigma_k\mathcal{L}^{(k)}_{B_{k}}\s_k} \leq \det( \mathcal{L}^{(k)}_{B_{k}}) \frac{\y_k^T\s_k}{\s_k^T\mathcal{L}^{(k)}_{B_{k}}\s_k},
	\end{equation*}
	which implies 
	\begin{equation*}
	\det(B_{k+1})=\det(\Phi(\sigma_k \mathcal{L}^{(k)}_{B_{k}}, \s_k, \y_k,0)) \leq  \det(\Phi(\mathcal{L}^{(k)}_{B_{k}}, \s_k, \y_k,0)),
	\end{equation*}
	i.e., the determinants of the matrices $B_{k+1}$ generated with the $\sigma_k$-scaled updating formula \eqref{eq:scaled_update} are smaller than the determinants of the matrices $B_{k+1}$ generated through the not scaled formula \eqref{eq:low_rank_unpdate_introduction} with $\phi=0$.
	
	In the experiments considered in Section \ref{sec:numerical_results}, the choice $\tilde{B}_k=\sigma_k \mathcal{L}^{(k)}_{B_{k}}$	turns out to improve in certain cases, the \textit{not-scaled} $\mathcal{L}^{(k)}QN$ methods and indicates a possible optimization strategy, based on $\mathcal{L}^{(k)}QN$, competitive with $L$-$BFGS$.

	Finally, let us observe that in \cite{CDTZ} the authors investigated $BFGS$-type methods where
	$\sigma_k \mathcal{L}^{(k)}_{{B}_k} \s_k = B_k \s_k$ for some $\sigma_k > 0$. Nevertheless, in \cite{CDTZ} $\sigma_k$ was a parameter used in the construction of the matrix algebra~$\mathcal{L}^{(k)}$. 	
\end{remark}

{\section{How to ensure $\mathcal{S}$ecant convergence conditions by low complexity matrices}\label{constructionSconvegence}}
In this section we will show that it is always possible to satisfy hypothesis of Theorem~\ref{Sconvergence} by a low complexity matrix $\tilde{B}_k$.
In particular, a matrix $\tilde{B}_k$ satisfying \eqref{traceconvergentsecantineq}, \eqref{determinantconvergentsecantineq} and \eqref{directionpreserving} will be explicitly constructed.

\noindent	As noticed in Lemma~\ref{repspecprop}, spectral conditions \eqref{traceconvergentsecantineq}, \eqref{determinantconvergentsecantineq} are always fulfilled when we choose 
$$\tilde{B}_k=\mathcal{L}_{B_k} \hbox{ for some } \mathcal{L}=\sd U.$$
{Nevertheless, the condition} 
\begin{equation}\label{eq:convergenceconditionprojection}
\mathcal{L}_{B_k} \s_k= {B}_k\s_k.
\end{equation}
{is not satisfied for a generic matrix algebra $\mathcal{L}$ and we have to face the following Problem \ref{TNLP} (see \cite{CDTZ} for an analogous problem involving a parameter $\sigma$): }
\begin{prob} \label{TNLP}
	Given a pd matrix $A \in \mathbb{R}^{n\times n}$  and a vector $\s \in \mathbb{R}^n$, find a low complexity orthogonal matrix ${U}$ such that 
	\begin{equation}
	{\mathcal{L}}_{A}s=As 
	\end{equation}
	where ${\mathcal{L}}=\sd {U}$.
\end{prob}
\noindent Observe that Problem \ref{TNLP} has been solved in \cite{CDTpagerank} in the particular case when $\s$ is an eigenvector of $A$ {with the aim} to speed-up the Pagerank computation by the preconditioned Euler-Richardson method. {The following Lemma~\ref{lemma:eigenvector_tnlp_solution}  completely characterizes solution of Problem \ref{TNLP} in this case.}
\begin{lemma} \label{lemma:eigenvector_tnlp_solution}
	{Le $A$ be a $n \times n$ symmetric matrix, if $\s$ is such that $A\s=\gamma \s $, then for any orthogonal matrix $L$ such that $\s/\|\s\|$ is among its columns, we have 
		\begin{equation*}
		{\mathcal{L}}_{A}s=As
		\end{equation*}
		where $\mathcal{L}=\sd L$. In particular $L$ can be chosen as an orthogonal Householder matrix.}
\end{lemma}  
\begin{proof}
	{Consider an orthogonal $L$ such that $L \e_{k}=\s/\|\s\|$ for some fixed $k \in \{1,\dots,n\}$. From \eqref{eq:proj_expressionref} we have $\mathcal{L}_A=Ld(\z_A)L^T$ being $\z_A$ the vector $$\z_A=[\dots,(L^TAL)_{ii}, \dots]^T,$$ 
		and hence
		\begin{equation}
		\mathcal{L}_A\s={(\z_A)_k}\s=\frac{\s^T A \s}{\|\s\|^2}\s={\gamma}\s=A\s.
		\end{equation} 
		For the second part see Lemma~\ref{lem:householder_fixed_columns} in the Appendix.}
\end{proof}
%

The following Theorem~\ref{directionpreservingprojection} solves Problem \ref{TNLP} in the general case and, at the same time,  sheds light on algorithmic details necessary for the construction of the solution. In \cite{CDZarnoldipreserver} it is solved a more general problem  where the projection $\mathcal{L}_A$ retains the action of $A$ on a set of vectors instead on a single one. \\

\noindent Let us begin recalling the well-known Arnoldi algorithm {\cite{saig}} for finding an orthogonal basis of the Krylov subspace $$\mathcal{K}_m(A,\v):=<\v, A\v, \dots, A^{m-1}\v>.$$
{In what follows we will assume $\dim \mathcal{K}_m(A,\v)=m$. }

\IncMargin{1em}
\begin{algorithm}[H]
	\LinesNumbered
	
	\SetKwData{Left}{left}\SetKwData{This}{this}\SetKwData{Up}{up}
	
	\SetKwFunction{Union}{Union}\SetKwFunction{FindCompress}{FindCompress}
	
	\SetKwInOut{Input}{input}\SetKwInOut{Output}{output}
	
	\KwData{$A$, $\v_1:=\v/\|\v\|_2$;}
	
	\While{$j \leq m$ }{
		Compute $\w:=A\v_j$ \;
		\While {$i \leq j$}{
			Compute $h_{i,j}=(\w,\v_i)$ \;
			Compute $\w:=\w-h_{i,j} \v_i$ \;
		}
		Compute $h_{j+1,j}:=\|\w\|_2$ and $\v_{j+1}:=\w/h_{j+1,j}$ \;	
	}
	\caption{Arnoldi Algorithm}{\label{ar}}
\end{algorithm}\DecMargin{1em}

\noindent	The above algorithm produces an orthonormal basis $V_m=[\v_1, \dots, \v_m]$ of the Krylov subspace $K_m(A,\v)$ such that 
$$AV_m=V_mH_m+h_{m+1,m}\v_{m+1}\e_m^T,$$ where the matrix $H_m$ denotes the $m \times m$
upper Hessenberg matrix whose coefficients are the $h_{i,j}$ computed by the algorithm. From the above observations we obtain
\begin{equation} \label{Hmdef}
V_m^TAV_m=H_m.
\end{equation}

\noindent 	Moreover, the following lemma holds :
\begin{lemma}[\cite{Saad92}] \label{saadlem}
	Let $A$ be a $n \times n$ real matrix and $V_m$, $H_m$ the results of m steps of the Arnoldi or Lanczos method applied to A. Then for any polynomial $p_j$ of degree $j \leq m-1$ the following equality holds:
	\begin{equation}
	p_j(A)\v_1=V_mp_j(H_m)\e_1.
	\end{equation}
\end{lemma}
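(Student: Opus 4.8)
The plan is to reduce the identity to monomials and then prove it by induction, using the Arnoldi relation recalled above together with the upper Hessenberg structure of $H_m$. Since both sides of the asserted equality are linear in the polynomial $p_j$, it suffices to prove it for each monomial $p_j(t)=t^j$, i.e. to show $A^j\v_1=V_mH_m^j\e_1$ for $j=0,1,\dots,m-1$. The case $j=0$ is immediate, being just $\v_1=V_m\e_1$, which holds because $\v_1$ is the first column of $V_m$.

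For the inductive step I would assume $A^j\v_1=V_mH_m^j\e_1$ for some $j\le m-2$, apply $A$, and substitute the Arnoldi relation $AV_m=V_mH_m+h_{m+1,m}\v_{m+1}\e_m^T$ to obtain
\[
A^{j+1}\v_1 = AV_mH_m^j\e_1 = V_mH_m^{j+1}\e_1 + h_{m+1,m}\,\v_{m+1}\,(\e_m^TH_m^j\e_1).
\]
Thus the step goes through precisely when the scalar $\e_m^TH_m^j\e_1=(H_m^j)_{m,1}$ vanishes.

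The crux of the argument --- and the only place the Hessenberg structure is used --- is therefore to show $(H_m^j)_{m,1}=0$ for all $j\le m-2$. I would establish the more general fact that for an upper Hessenberg matrix $H$ (so $H_{i,l}=0$ whenever $i>l+1$) one has $(H^j)_{i,l}=0$ whenever $i>l+j$. This follows by an easy induction on $j$: writing $(H^{j+1})_{i,l}=\sum_k (H^j)_{i,k}H_{k,l}$, each nonzero summand forces simultaneously $k\ge i-j$ (by the inductive hypothesis) and $k\le l+1$ (by the Hessenberg structure of $H$), which is impossible once $i>l+j+1$. Specializing to $i=m$, $l=1$ gives $(H_m^j)_{m,1}=0$ exactly when $m>j+1$, i.e. for $j\le m-2$, which is precisely the range needed in the inductive step.

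Combining these pieces, the induction is valid for all $j\le m-1$, and linearity then recovers the claim for every polynomial of degree at most $m-1$. I expect the Hessenberg-power bound to be the main (though still routine) obstacle; the Lanczos case requires no extra work, being the same argument applied to the tridiagonal --- hence a fortiori upper Hessenberg --- matrix $H_m$ produced when $A$ is symmetric.
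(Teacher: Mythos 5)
Your proof is correct. The paper does not actually prove this lemma --- it imports it directly from the cited reference \cite{Saad92} --- and your argument (reduce to monomials by linearity, induct using $AV_m=V_mH_m+h_{m+1,m}\v_{m+1}\e_m^T$, and kill the residual term via the fact that $(H_m^j)_{m,1}=0$ for $j\le m-2$ because powers of an upper Hessenberg matrix satisfy $(H^j)_{i,l}=0$ for $i>l+j$) is precisely the standard proof given in that reference, so there is nothing to correct or compare.
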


\begin{theorem} \label{directionpreservingprojection} 
	Let $A \in \mathbb{R}^{n \times n}$ be a symmetric matrix.
	For every fixed integer $m$ and $1 \leq m \leq n$ and for any $\s \in \mathbb{R}^{n}$ there exists an orthogonal matrix $L \in \mathbb{R}^{n \times n}$ such that if  $\mathcal{L}= \sd L$ and $\mathcal{L}_A$ is the best approximation in Frobenius norm of $A$ in $\mathcal{L}$, then
	\begin{equation} \label{thesis:directionpreserving}
	p_j(\mathcal{L}_A)\s = p_j(A)\s 
	\end{equation} 
	for any polynomial $p_j$ of degree $j \leq m-1$.
	{Moreover, the thesis is satisfied also by any other orthogonal matrix having, among its columns, $m$ particular columns of $L$ (see \eqref{eq:directionequality}).}
\end{theorem}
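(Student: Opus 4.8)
The plan is to build $L$ explicitly from a Krylov/Lanczos factorization of $A$ generated by $\s$, and then to exploit the fact that the Frobenius projection onto $\sd L$ only discards the off-diagonal part of $L^TAL$ in the $L$-basis. First I would apply the Arnoldi Algorithm~\ref{ar} (here the symmetric Lanczos process) to $A$ with starting vector $\v_1=\s/\|\s\|$, obtaining an orthonormal basis $V_m=[\v_1,\dots,\v_m]$ of $\mathcal{K}_m(A,\s)$ together with the tridiagonal matrix $H_m=V_m^TAV_m$ satisfying $AV_m=V_mH_m+h_{m+1,m}\v_{m+1}\e_m^T$. The important structural remark is that $A\v_i\in\Span(V_m)$ for every $i\le m-1$, the lost component $h_{m+1,m}\v_{m+1}$ appearing only at $i=m$. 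I would then diagonalize the small matrix $H_m=Q\Lambda Q^T$ with $Q\in\mathbb{R}^{m\times m}$ orthogonal, and set $L:=[\,V_mQ\mid W\,]$, where the columns of $W$ form any orthonormal completion of $\Span(V_m)^{\perp}$; thus the first $m$ columns of $L$ are the Ritz vectors $V_mQ$.

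The core computation is to evaluate $\mathcal{L}_A\v_i$ for $i\le m-1$ through \eqref{eq:proj_expressionref}. Writing $\mathcal{L}_A=L\,d(\z_A)L^T$ with $[\z_A]_l=[L^TAL]_{ll}$, the upper-left $m\times m$ block of $L^TAL$ is $(V_mQ)^TA(V_mQ)=Q^TH_mQ=\Lambda$, which is \emph{already diagonal}: the projection discards nothing on $\Span(V_m)$. Since each $\v_i$ is orthogonal to the columns of $W$, the $W$-part of $\mathcal{L}_A$ annihilates $\v_i$, and a direct expansion gives
\begin{equation} \label{eq:directionequality}
\mathcal{L}_A\v_i=V_mQ\,\Lambda\,Q^TV_m^T\v_i=V_mH_m\e_i=A\v_i,\qquad i=1,\dots,m-1,
\end{equation}
so that $\mathcal{L}_A$ and $A$ coincide on the whole subspace $\mathcal{K}_{m-1}(A,\s)=\Span(\v_1,\dots,\v_{m-1})$.

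From \eqref{eq:directionequality} the thesis follows quickly. The cleanest route is a short induction: $\mathcal{L}_A^{\,j}\s=A^{\,j}\s$ holds for $j=0$, and if it holds for some $j\le m-2$, then $A^{\,j}\s\in\mathcal{K}_{j+1}(A,\s)\subseteq\Span(\v_1,\dots,\v_{m-1})$, so applying $\mathcal{L}_A$ and using \eqref{eq:directionequality} term by term gives $\mathcal{L}_A^{\,j+1}\s=A^{\,j+1}\s$; as the monomials span all polynomials of degree $\le m-1$, this yields \eqref{thesis:directionpreserving}. Equivalently, one may argue through Lemma~\ref{saadlem}: by \eqref{eq:directionequality} the Arnoldi factorization of $\mathcal{L}_A$ started at $\v_1$ reproduces $V_m$ and the first $m-1$ columns of $H_m$, and for $j\le m-1$ the vector $p_j(H_m)\e_1$ depends only on those columns, whence $p_j(\mathcal{L}_A)\s=\|\s\|\,V_mp_j(H_m)\e_1=p_j(A)\s$. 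The \emph{moreover} statement is then immediate: the right-hand side of \eqref{eq:directionequality} involves only the $m$ Ritz columns $V_mQ$, the completion $W$ playing no role, so any orthogonal matrix having these $m$ columns among its own induces the same action on $\mathcal{K}_{m-1}(A,\s)$ and hence satisfies \eqref{thesis:directionpreserving}.

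I expect the main obstacle to be conceptual rather than computational: one must recognize that reproducing the action of $A$ on a Krylov subspace by a rank-structured projection forces the first $m$ columns of $L$ to diagonalize the \emph{compression} $H_m$ (i.e.\ to be Ritz vectors), and \emph{not} to be the Lanczos vectors themselves—indeed, were $\v_1,\dots,\v_m$ taken as columns of $L$, each would become an eigenvector of $\mathcal{L}_A$ and the tridiagonal action of $A$ would be destroyed. That diagonalizing $H_m$ is the right (and essentially the only) choice can be checked on the reduced $m\times m$ problem: requiring $(\sd Q)_{H_m}\e_i=H_m\e_i$ for $i\le m-1$, together with the symmetry and the trace-preservation of the projection (property 4 of Lemma~\ref{repspecprop}), forces $(\sd Q)_{H_m}=H_m$, i.e.\ $Q$ must diagonalize $H_m$. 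For small $m$ the matrix $L$ can finally be realized as a product of $m$ Householder reflections fixing $\Span(V_m)^{\perp}$, which is what makes the resulting transform of low complexity.
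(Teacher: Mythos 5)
Your proof is correct and follows essentially the same route as the paper's: you build $L$ from the Ritz vectors $V_mQ$ of the Krylov compression $H_m=V_m^TAV_m$ (completed arbitrarily on $\Span(V_m)^{\perp}$), observe that the Frobenius projection then acts as $V_mH_mV_m^T$ on $\Span(V_m)$ so that $\mathcal{L}_A$ and $A$ agree on $\mathcal{K}_{m-1}(A,\s)$, and conclude by induction on the monomials. The only cosmetic difference is that you establish $\mathcal{L}_A\v_i=A\v_i$ for all $i\leq m-1$ up front and run the induction via subspace membership, whereas the paper proves the case $j=1$ and then uses $\mathcal{L}_AV_m=V_mH_m$ together with Lemma~\ref{saadlem} at each inductive step.
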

\begin{proof} Consider the matrices $V_m$ and $H_m$ constructed from Arnoldi Algorithm applied to $\mathcal{K}_m(A,\s)$ (observe that the first column of $V_m$ is $\v_1:=\s/\|\s\|$). From Lemma~\ref{saadlem} with $j=1$ we have 
	$$A\v_1=V_mH_mV_m^T\v_1.$$ 
	From \eqref{Hmdef} we can write
	
	\begin{equation} \label{eq:action_compression}
	A\v_1=V_mQQ^TV_m^TAV_mQQ^TV_m^T\v_1
	\end{equation}
	for any orthogonal matrix $Q$. In particular, being $V_m^TAV_m$ symmetric,  we can choose in \eqref{eq:action_compression} $Q$ as the orthogonal matrix which diagonalizes $V_m^TAV_m$, i.e.
	\begin{equation}
	A\v_1=V_mQ \begin{bmatrix}
	x_1 & 0 & \dots & 0 \\
	0 & \ddots & \ddots & 0\\
	0 & \ddots & \ddots & 0 \\
	0 & \dots & 0 & x_m
	\end{bmatrix}Q^TV^T_m\v_1,
	\end{equation}
	where $x_i=\e_i^TQ^TV_m^TAV_mQ\e_i$ for $i=1, \dots, m$.
	Consider now the matrix
	\begin{equation} \label{eq:directionequality}
	L=[V_mQ\e_1|\dots|V_mQ\e_m|\q_{m+1}|\dots|\q_{n}]
	\end{equation} 
	where
	$\{\q_{m+1},\dots,\q_{n}\}$ is any orthonormal basis for
	
	\begin{equation} \label{eq:othogonaleq}
	<V_mQ\e_1,\dots,V_mQ\e_m>^{\bot}=<V_m\e_1,\dots,V_m\e_m>^{\bot}
	\end{equation}
	(for example $L$ can be obtained as the product of $m$ Householder matrices, see Lemma~\ref{lem:householder_fixed_columns} in the Appendix),
	set $\mathcal{L} = \sd L$ and consider $\mathcal{L}_A$ the best approximation in Frobenius norm of $A$ in $\mathcal{L}$.
	
	\noindent In order to prove that $\mathcal{L}_A$ satisfies \eqref{thesis:directionpreserving} it is sufficient to prove that 
	\begin{equation} \label{polynomialpreserver}
	\mathcal{L}_A^{j}\v_1=A^{j}\v_1 \hbox{ for } 0 \leq j \leq m-1.
	\end{equation}
	Of course, \eqref{polynomialpreserver} is true for $j=0$. 
	{The equality $\mathcal{L}_A \v_1=A\v_1$ follows observing that {using \eqref{eq:proj_expressionref}} we have
		\begin{equation}
		\begin{split}
		& \mathcal{L}_A\v_1=(\sum_i^n(L^TAL)_{ii}L\e_i(L\e_i)^T)\v_1\\
		&=(\sum_i^{m} x_i(V_mQ\e_i)(V_mQ\e_i)^T)\v_1=A\v_1
		\end{split}
		\end{equation} 
		where in the second equality we take into account that $\q_i^T\v_1=0$ for $i \in \{m+1, \dots, n\}$ (see \eqref{eq:othogonaleq}) and \eqref{eq:directionequality}.
	}	
	
	{	
		Suppose now \eqref{polynomialpreserver} true for all indexes $j$ in $[1,k],\; k \leq m-2$ and let us prove it for $j=k+1$.
		From inductive hypothesis and Lemma~\ref{saadlem} we have
		$$\mathcal{L}_{A}^{k+1} \v_1=\mathcal{L}_{A}\mathcal{L}_{A}^{k} \v_1=\mathcal{L}_{A}A^{k}\v_1=\mathcal{L}_{A}V_mH_m^{k}\e_1.$$
		From direct computation using \eqref{eq:othogonaleq} and the definition of $Q$,  we have $\mathcal{L}_AV_m=V_mH_m$ and thus
		$$\mathcal{L}_{A}V_mH_m^{k}\e_1=V_mH_m^{k+1} \e_1 =A^{k+1}\v_1,$$ 
		where the last equality follows using again Lemma~\ref{saadlem}. Hence \eqref{polynomialpreserver} holds also for $j\in [1,k+1]$.}	
	
\end{proof}

\begin{corollary}\label{remark:ontotallynonlinear}
	{Solutions $U$ of Problem \ref{TNLP} are obtained by using Theorem~\ref{directionpreservingprojection} for $m=2$ and $j=1$. Observe that just two of the columns of such orthogonal matrices $U$ are uniquely determined (they are suitable linear combinations of the vectors $\s$ and $A\s$), and hence one of such $U$ can be chosen as the product of two Householder matrices that can be determined by performing two products of $A$ by a vector plus $O(n)$ FLOPs.}
\end{corollary}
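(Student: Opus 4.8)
The plan is to specialize Theorem~\ref{directionpreservingprojection} to $m=2$ and to read off the three assertions of the corollary directly from the explicit construction carried out in that theorem's proof. First I would note that, with $m=2$, the conclusion \eqref{thesis:directionpreserving} holds for every polynomial $p_j$ of degree $j\le m-1=1$; choosing in particular the monomial $p_1(t)=t$ gives $\mathcal{L}_A\s=A\s$, which is precisely the defining equation of Problem~\ref{TNLP}. Hence any $L$ produced by the theorem for $m=2$ yields $\mathcal{L}=\sd L$ solving Problem~\ref{TNLP}, and it remains only to verify the structural and complexity claims.

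For the structural claim I would trace the construction of $L$ in the proof of Theorem~\ref{directionpreservingprojection} in the case $m=2$. Arnoldi applied to $\mathcal{K}_2(A,\s)=\langle\s,A\s\rangle$ produces $V_2=[\v_1|\v_2]$ with $\v_1=\s/\|\s\|$ and $\v_2$ obtained by orthonormalizing $A\s$ against $\v_1$, so both columns of $V_2$ lie in $\Span\{\s,A\s\}$. The matrix $L$ of \eqref{eq:directionequality} has first two columns $V_2Q\e_1$ and $V_2Q\e_2$, which are linear combinations of the columns of $V_2$ and therefore of $\s$ and $A\s$; here $Q$ diagonalizes the symmetric $2\times2$ compression $V_2^TAV_2=H_2$, so these two directions are fixed (up to sign and ordering) as the eigenvectors of $H_2$. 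The remaining columns $\q_3,\dots,\q_n$ may be taken as an arbitrary orthonormal basis of $\langle\v_1,\v_2\rangle^{\bot}$ by \eqref{eq:othogonaleq}, and are consequently not determined; this justifies the phrase that just two of the columns are uniquely determined.

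Next I would invoke the \emph{moreover} part of Theorem~\ref{directionpreservingprojection}: the conclusion already holds for any orthogonal matrix possessing those two particular columns among its columns. By Lemma~\ref{lem:householder_fixed_columns} of the Appendix, an orthogonal matrix with two prescribed orthonormal columns can be realized as a product of two Householder reflections, each being a rank-one modification of the identity and hence of low complexity. Choosing $U$ to be such a product therefore gives a solution of Problem~\ref{TNLP} of the required form. Finally, for the FLOP count I would tally the Arnoldi step: forming $\v_2$ together with the off-diagonal entry $h_{21}$ requires the single product $A\v_1$ (equivalently $A\s$), and computing the last diagonal entry $h_{22}=\v_2^TA\v_2$ of the symmetric tridiagonal $H_2$ requires one further product $A\v_2$, while $h_{12}=h_{21}$ needs nothing more by symmetry; thus two matrix-vector products with $A$ suffice. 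The $2\times2$ symmetric eigenproblem is $O(1)$, and assembling $V_2Q\e_1,V_2Q\e_2$ and the two Householder vectors costs $O(n)$, giving exactly the claimed cost. The only point requiring care is the meaning of ``uniquely determined'': it must be read as determination up to sign and ordering of the eigenvectors of $H_2$, the genuine freedom residing entirely in the orthogonal completion $\q_3,\dots,\q_n$; verifying the low-complexity assertion likewise hinges on the Appendix lemma, so I would make sure that dependency is stated explicitly.
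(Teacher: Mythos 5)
Your proposal is correct and follows essentially the same route as the paper, which simply points to the construction in \eqref{eq:directionequality} (whose first two columns for $m=2$ are linear combinations of $\s$ and $A\s$) and to Lemma~\ref{lem:householder_fixed_columns} for realizing the matrix as a product of two Householder reflections. Your additional bookkeeping of the two matrix--vector products and the $O(n)$ assembly cost, and your remark that ``uniquely determined'' should be read up to sign and ordering of the eigenvectors of $H_2$, are accurate elaborations of what the paper leaves implicit.
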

\begin{proof}
	For the second statement see \eqref{eq:directionequality} in the proof of Theorem~\ref{directionpreservingprojection} and Lemma~\ref{lem:householder_fixed_columns} in the Appendix.
\end{proof}
\subsection{{Convergent $\mathcal{L}^{(k)}QN$ scheme}}
{In order to impose \eqref{eq:convergenceconditionprojection} for each $k$, an adaptive choice of the space $\mathcal{L}=\sd U$ is necessary. Any method obtained in this way will be called $\mathcal{L}^{(k)}QN$ extending  the notation $\mathcal{L}QN$ introduced in \cite{DFLZ} to denote the $BFGS$-type methods with $\tilde{B}_k=\mathcal{L}_{{B}_k}$ being $\mathcal{L}$ fixed. As a result of what discussed in Section \ref{sectionsconvergence} and in the first part of this section we report here the following Algorithm \ref{convergentlkqn_no_fq} which can be considered a refinement and an extension of the scheme proposed in \cite{CDTZ}:}

\IncMargin{1em}
\begin{algorithm}[H]
	\LinesNumbered
	
	\SetKwData{Left}{left}\SetKwData{This}{this}\SetKwData{Up}{up}
	
	\SetKwFunction{Union}{Union}\SetKwFunction{FindCompress}{FindCompress}
	
	\SetKwInOut{Input}{input}\SetKwInOut{Output}{output}
	
	\KwData{$\x_0\,\in \mathbb{R}^n, B_0$ pd, $\d_0=-\g_0$, $k=0$; \\}
	
	\While{$\g_k\neq 0$}{
		$\x_{k+1}=\x_k+\lambda_k \d_k$ \tcc*[r]{$\lambda_k$ verifies conditions \eqref{AG1}, \eqref{AG2}}
		$\s_k=\x_{k+1}-\x_k $\;
		$\y_k=\g_{k+1}-\g_k $\;
		
		\tcc{\footnotesize{Defininition of the new algebra $\mathcal{L}^{(k)}$}} 
		\eIf{$\|B_k\s_k-\frac{\s_k B_k \s_k}{\|\s_k\|^2}\s_k\| < toll$}
		{define $U_k$ applying Lemma~\ref{lemma:eigenvector_tnlp_solution}  \nllabel{algline:1if_no_qt}\;}
		{define $U_k$ applying Corollary \ref{remark:ontotallynonlinear}  \nllabel{algline:u_kconstruction_no_qt}\;}
		Compute $\mathcal{L}^{(k)}_{B_k} $ \nllabel{algline:B_kcomputation_no_qt} \;
		\tcc{\footnotesize{$\mathcal{L}^{(k)}:= \sd U_k$ satisfies $\mathcal{L}^{(k)}_{{B}_k}\s_k=B_k\s_k$ }} 
		$B_{k+1}= \Phi(\mathcal{L}^{(k)}_{{B}_k}, \s_k,\y_k, \phi)$ \nllabel{algline:B_k_update} \;
		Compute $\d_{k+1}=-B_{k+1}^{-1}\g_{k+1}$ \nllabel{algline:sm_no_qt}\;
		Set $k:=k+1$ \;		
	}
	\caption{{A convergent $\mathcal{L}^{(k)}$QN}}{\label{convergentlkqn_no_fq}}
\end{algorithm}\DecMargin{1em}
\noindent  In more details, observe that, to perform  Line \ref{algline:u_kconstruction_no_qt} of  Algorithm~\ref{convergentlkqn_no_fq}, it is necessary to 
apply Corollary \ref{remark:ontotallynonlinear}  to $B_k=\Phi(\mathcal{L}^{(k-1)}_{{B}_{k-1}}, \s_{k-1},\y_{k-1}, \phi)$ and $\s_k$, obtaining $U_k:=\mathcal{H}(\h^{(k)}_2)\mathcal{H}(\h^{(k)}_1)$. The vectors $\h_1^{(k)}$ and $\h_2^{(k)}$ can be determined by performing two products of $B_k$ by a vector. As $B_k$ is a low rank correction of the low complexity matrix $\mathcal{L}_{B_{k-1}}^{(k-1)}$, such products can be calculated in $O(n)$ FLOPs (see Corollary \ref{remark:ontotallynonlinear}).
To compute $\mathcal{L}_{B_{k}}^{(k)}$ in Line \ref{algline:B_kcomputation_no_qt}, observe that, by Lemma~\ref{repspecprop}, 
\begin{equation*}
\mathcal{L}^{(k)}_{B_{k}}=\mathcal{L}^{(k)}_{\mathcal{L}_{{B}_{k-1}}^{(k-1)}}- \mathcal{L}^{(k)}_{\frac{\mathcal{L}^{(k-1)}_{{B}_{k-1}}\s_{k-1}\s_{k-1}^T\mathcal{L}_{{B}_{k-1}}^{({k-1})}}{\s_{k-1}^T\mathcal{L}_{{B}_{k-1}}^{({k-1})}\s_{k-1}}}+\mathcal{L}^{(k)}_{\frac{\y_{k-1}\y_{k-1}^T}{\y_{k-1}^T\s_{k-1}}}+(\phi\; \s_{k-1}^T\mathcal{L}^{({k-1})}_{{B}_{k-1}}\s_{k-1})\mathcal{L}^{(k)}_{\v_{k-1}\v_{k-1}^T},
\end{equation*}
and hence, it is sufficient to compute its eigenvalues (see \eqref{eq:proj_expressionref}), i.e.,

\begin{equation} \label{eq:update_eigenvalues}
\begin{split}
\lambda(\mathcal{L}_{{B}_{k}}^{({k})})&=d([U_{k}^T{B}_{k}U_{k}]) \\
&=d(U_{k}^T\mathcal{L}_{{B}_{k-1}}^{(k-1)}U_{k})-d(U_{k}^T\frac{\mathcal{L}^{(k-1)}_{{B}_{k-1}}\s_{k-1}\s_{k-1}^T\mathcal{L}_{{B}_{k-1}}^{({k-1})}}{\s_{k-1}^T\mathcal{L}_{{B}_{k-1}}^{({k-1})}\s_{k-1}}U_{k})+\\
& +d(U_{k}^T\frac{\y_{k-1}\y_{k-1}^T}{\y_{k-1}^T\s_{k-1}}U_{k}+{(\phi\; \s_{k-1}^T\mathcal{L}^{({k-1})}_{{B}_{k-1}}\s_{k-1})}U_{k}^T\v_{k-1} \v_{k-1}^T U_{k}).
\end{split}	
\end{equation}

\noindent {Notice that the above equality is an extension of an eigenvalues updating formula obtained in \cite{DFLZ} where $\mathcal{L}^{(k)}\equiv \mathcal{L}$ for all $k$.}
\subsection{Complexity} \label{sec:complexity_no_qt}

For every $k$ the orthogonal matrices at Line~\ref{algline:1if_no_qt} or Line~\ref{algline:u_kconstruction_no_qt} of Algorithm~\ref{convergentlkqn_no_fq} are the product of at most two (only one if Line~\ref{algline:1if_no_qt}) Householder reflections, that can be constructed in $O(n)$ FLOPs (see Lemma~\ref{lem:householder_fixed_columns} in the Appendix). Now, to calculate $\lambda(\mathcal{L}_{B_k}^{(k)})$ in \eqref{eq:update_eigenvalues}, we compute the matrix vector products $\mathcal{L}_{B_{k-1}}^{(k-1)}\s_{k-1}$ in $O(n)$ FLOPs, and the same amount of operations is sufficient to compute $d(U_{k}^T\mathcal{L}_{{B}_{k-1}}^{(k-1)}U_{k})$ (using {Proposition 1 in}  \cite{CDZarnoldipreserver}).
Finally, observe that Line  \ref{algline:sm_no_qt} of Algorithm~\ref{convergentlkqn_no_fq} can be performed using Sherman-Morrison formula, which states that $B_{k+1}^{-1}$ is a low rank correction of $(\mathcal{L}_{B_k}^{(k)})^{-1}$; for example if $\phi=0$ in Line \ref{algline:B_k_update} of Algorithm \ref{convergentlkqn_no_fq}, then 
\begin{equation*}
B_{k+1}^{-1}=(I-\frac{ \s_k \y_k^T}{\s_k^T \y_k}) (\mathcal{L}_{B_{k}}^{(k)})^{-1} (I- \frac{ \y_k \s_k^T}{\s_k^T\y_k})+\frac{ \s_k \s_k^T}{\s_k^T\y_k}.
\end{equation*} 
Thus it is possible to infer that the computational complexity of Algorithm \ref{convergentlkqn_no_fq} is $O(n)$ in space and time (to store the matrices $\mathcal{L}_{{B}_{k}}^{({k})}=U_kd(\z_{B_k})U_k^T$ it is sufficient to store $\z_{B_k}$ and the vectors $\h_i^{(k)}$ needed to define $U_k$). 
When $\phi=0$, assuming that the matrices $U_k$ are always constructed according to Line~\ref{algline:u_kconstruction_no_qt} of Algorithm \ref{convergentlkqn_no_fq}, a straightforward implementation of Algorithm \ref{convergentlkqn_no_fq} requires roughly  $70n$ multiplications and the storage of $15$ vectors of length $n$.
\section{The quadratic finite termination property} \label{sec:finite_quadratic_termination}
In literature Quasi-Newton methods are studied  that terminate in a finite number of steps when applied to quadratic functions (quadratic finite termination). See \cite{kolda1998bfgs,nazareth1979relationship} and references therein.
In this section, extending the analogous result obtained in \cite{kolda1998bfgs} for $L$-$BFGS$, we will introduce conditions on $\tilde{B}_k$ (see \eqref{h0condion}) which endow the $\mathcal{S}$ $BFGS$-type methods with the quadratic finite termination property. 

Let us consider a pd matrix $A$ and the problem 
\begin{equation} \label{prob:quadratic}
\min_{\x \in \mathbb{R}^n}f(\x)\hbox{ where }f(\x):=\frac{1}{2}\x^TA\x-\x^T\b.
\end{equation}
In order to solve Problem \eqref{prob:quadratic} consider the following Algorithm \ref{alg:bfgstquadratic} which is {the $\mathcal{S}$ version of} Algorithm \ref{gBc} where we use the exact line search and where we set $H_k=B_k^{-1}$, $\tilde{H}_k=\tilde{B}_k^{-1}$ and $\phi=0$ (in Line 8 we have the Sherman-Morrison representation of $H_{k+1}=B_{k+1}^{-1}$).

\IncMargin{1em}
\begin{algorithm}[H]
	\LinesNumbered
	
	\SetKwData{Left}{left}\SetKwData{This}{this}\SetKwData{Up}{up}
	
	\SetKwFunction{Union}{Union}\SetKwFunction{FindCompress}{FindCompress}
	
	\SetKwInOut{Input}{input}\SetKwInOut{Output}{output}
	
	\KwData{$\x_0\,\in \mathbb{R}^n, \,\g_0=A\x_0-\b,\, \tilde{H}_0=H_0$ pd, 
		$\d_0=-H_0\g_0$, k=0; \\}
	
	\While{$\g_k\neq 0$}{

		$\x_{k+1}=\x_k+\lambda_k \d_k$ \tcc*[r]{{$\lambda_k:=\arg \min_{\lambda}f(\x_k+\lambda \d_k)$}}
		
		$\s_k=\x_{k+1}-\x_k $\;
		$\g_{k+1}=A\x_{k+1}-\b$\;
		
		$\y_k=\g_{k+1}-\g_k $\;
		$\rho_k=1/\s_k^T\y_k$ \;
		Define $\tilde{H}_k$ pd \;
		$H_{k+1}=(I-\rho_k \s_k \y_k^T) \tilde{H}_k (I-\rho_k \y_k \s_k^T)+\rho_k \s_k \s_k^T $\;
		Set $\d_{k+1}=-H_{k+1}\g_{k+1}$\;
		Set $k:=k+1$ \;		
	}
	\caption{$BFGS$-type for quadratic problems}{\label{alg:bfgstquadratic}}
\end{algorithm}\DecMargin{1em}
\begin{theorem} \label{theo:good_choice_bfgs_type}
	Let us consider Algorithm \ref{alg:bfgstquadratic}. If 	
	\begin{equation} \label{h0condion}
	\tilde{H}_k \g_{k+1}=  \beta_k H_0\g_{k+1} \hbox{ for some } \beta_k \neq 0,
	\end{equation}
	then we have :
	\begin{equation} \label{gradientsearchdirection}
	\g_{k+1}^T\s_j=0 \hbox{ for all } j=0,\dots,k;
	\end{equation}
	\begin{equation} \label{aconjugatesearchdirec}
	\s_{k+1}^TA\s_j=0 \hbox{ for all } j=0,\dots,k;
	\end{equation}
	\begin{equation} \label{spansearchdirec}
	\Span\{\s_0,\dots,\s_{k+1}\}=\Span\{H_0\g_0,\dots,H_0\g_{k+1}\};
	\end{equation}
	
\end{theorem}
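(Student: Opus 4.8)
The plan is to prove the three statements \eqref{gradientsearchdirection}, \eqref{aconjugatesearchdirec}, \eqref{spansearchdirec} simultaneously by induction on $k$, since they are tightly coupled: the conjugacy of the search directions, the orthogonality of the gradient to past steps, and the Krylov-type span identity mutually reinforce one another at each step. First I would verify the base case $k=0$. Here $\s_0 = \lambda_0\d_0 = -\lambda_0 H_0\g_0$, so $\Span\{\s_0\} = \Span\{H_0\g_0\}$ is immediate, and \eqref{gradientsearchdirection} for $j=0$ reduces to $\g_1^T\s_0=0$, which is exactly the exact-line-search optimality condition $\frac{d}{d\lambda}f(\x_0+\lambda\d_0)=0$ at $\lambda_0$ (equivalently $\g_1^T\d_0=0$).

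Next, assuming \eqref{gradientsearchdirection}--\eqref{spansearchdirec} hold up to index $k-1$, I would establish them at index $k$. The crucial computation is the expression for the new search direction $\d_{k+1}=-H_{k+1}\g_{k+1}$ using the Sherman--Morrison update in Line~8 of Algorithm~\ref{alg:bfgstquadratic}, namely
\begin{equation*}
H_{k+1}\g_{k+1}=(I-\rho_k\s_k\y_k^T)\tilde{H}_k(I-\rho_k\y_k\s_k^T)\g_{k+1}+\rho_k\s_k\s_k^T\g_{k+1}.
\end{equation*}
Here the hypothesis \eqref{h0condion}, $\tilde{H}_k\g_{k+1}=\beta_k H_0\g_{k+1}$, is the engine of the whole argument: it replaces the unknown matrix $\tilde{H}_k$ acting on $\g_{k+1}$ by a scalar multiple of the fixed preconditioner $H_0$, which is precisely what lets the $H_0$-Krylov structure propagate. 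I would use \eqref{gradientsearchdirection} at step $k$ (i.e.\ $\g_{k+1}^T\s_j=0$ for $j\le k$, to be proved first in this induction step) to kill several terms, in particular $\s_k^T\g_{k+1}=0$ makes the last term vanish, so that $\d_{k+1}$ becomes a combination of $H_0\g_{k+1}$ and $\s_k$, which by the inductive span identity lies in $\Span\{H_0\g_0,\dots,H_0\g_{k+1}\}$; combined with $\s_{k+1}=\lambda_{k+1}\d_{k+1}$ this yields \eqref{spansearchdirec}.

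For \eqref{gradientsearchdirection} at step $k$, I would first note $\g_{k+1}^T\s_k=0$ is again the exact-line-search condition, and for $j<k$ I would write $\g_{k+1}=\g_{j+1}+A(\s_{j+1}+\dots+\s_k)$ (using $\y_i=A\s_i$ for the quadratic, so $\g_{i+1}-\g_i=A\s_i$) and then invoke the inductive orthogonality $\g_{j+1}^T\s_j=0$ together with the $A$-conjugacy \eqref{aconjugatesearchdirec} of the earlier steps to annihilate each remaining term. Then \eqref{aconjugatesearchdirec}, $\s_{k+1}^TA\s_j=0$ for $j\le k$, would follow by expressing $\s_{k+1}$ via the span identity and using $A\s_j=\y_j=\g_{j+1}-\g_j$ together with the gradient-orthogonality relations \eqref{gradientsearchdirection} just established. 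The main obstacle I anticipate is managing the interdependence cleanly: the three claims cannot be proved in isolation at a fixed step, so I must order the sub-arguments within the induction step carefully (typically: exact-line-search orthogonality $\g_{k+1}^T\s_k=0$ first, then the full \eqref{gradientsearchdirection}, then the span identity via the Sherman--Morrison expansion and \eqref{h0condion}, and finally conjugacy), making sure each sub-claim only uses facts already available. A secondary technical point is confirming that $\beta_k\neq 0$ guarantees $\d_{k+1}$ retains a genuine $H_0\g_{k+1}$ component so the span does not collapse, and that all denominators $\s_k^T\y_k$ are nonzero (which holds since $A$ is pd and the steps are nonzero until the gradient vanishes).
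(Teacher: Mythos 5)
Your overall strategy coincides with the paper's: a simultaneous induction in which $\g_{k+1}^T\s_k=0$ comes from exact line search, the case $j<k$ of \eqref{gradientsearchdirection} comes from $\y_i=A\s_i$ plus the inductive hypotheses, and the Sherman--Morrison form of $H_{k+1}$ together with \eqref{h0condion} and $\s_k^T\g_{k+1}=0$ yields $\s_{k+1}=-\beta_k\lambda_{k+1}H_0\g_{k+1}+c_k\s_k$, whence \eqref{spansearchdirec}. The one place where your plan, as written, does not close is the conjugacy step \eqref{aconjugatesearchdirec}. You propose to obtain $\s_{k+1}^TA\s_j=\s_{k+1}^T\y_j=\s_{k+1}^T(\g_{j+1}-\g_j)=0$ from ``the span identity and the gradient-orthogonality relations just established''. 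But the relations you have established are $\g_{k+1}^T\s_i=0$ (latest gradient against earlier steps), whereas the pairing $\s_{k+1}^T(\g_{j+1}-\g_j)$ requires $\g_i^T\s_{k+1}=0$ for $i\le k+1$ (earlier gradients against the latest step), and these are false in general (e.g.\ $\g_{k+1}^T\s_{k+1}=\lambda_{k+1}\g_{k+1}^T\d_{k+1}<0$). Likewise, bare membership $\s_{k+1}\in\Span\{H_0\g_0,\dots,H_0\g_{k+1}\}$ is not enough: writing $\s_{k+1}=\sum_ic_iH_0\g_i$ and using the mutual $H_0$-orthogonality of the gradients still leaves $c_{j+1}\g_{j+1}^TH_0\g_{j+1}-c_j\g_j^TH_0\g_j$, which does not cancel without knowledge of the coefficients.

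The repair is what the paper does: for $j<k$ expand $\s_{k+1}^TA\s_j=-\lambda_{k+1}\g_{k+1}^TH_{k+1}\y_j$ through the Sherman--Morrison formula, kill the rank-one terms using $\g_{k+1}^T\s_k=0$ and $\s_k^T\y_j=0$, and reduce to $-\beta_k\lambda_{k+1}\g_{k+1}^TH_0\y_j$, which vanishes because $\g_{k+1}^TH_0\g_i=0$ for $i\le k$ (a consequence of \eqref{gradientsearchdirection} together with the inductive span identity); equivalently, use your explicit two-term decomposition of $\s_{k+1}$ rather than span membership alone. For $j=k$ neither term vanishes individually, and there you need the secant relation $H_{k+1}\y_k=\s_k$, which gives $\s_{k+1}^T\y_k=-\lambda_{k+1}\g_{k+1}^TH_{k+1}\y_k=-\lambda_{k+1}\g_{k+1}^T\s_k=0$ directly. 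With these two fixes (and noting that the base case $k=0$ already involves the statements about $\s_1$, handled the same way with $\tilde H_0=H_0$), your induction is essentially the paper's proof, modulo the harmless reordering of the span and conjugacy sub-steps and your telescoped version of the $j<k$ gradient-orthogonality computation.
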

\begin{proof}
	By induction. The case $k=0$ can be easily verified. Let us suppose the thesis true for $k=0,\dots, \hat{k}-1$ and prove it for $k=\hat{k}$. Let us prove \eqref{gradientsearchdirection} : $\g_{\hat{k}+1}^T\s_{\hat{k}}=0$ since we are using exact line search; if $j < \hat{k}$ we have \begin{equation}
	\g_{\hat{k}+1}^T\s_{j}=\g_{\hat{k}}^T\s_{j}+\y_{\hat{k}}^T\s_j=\g_{\hat{k}}^T\s_{j}+\s_{\hat{k}}^TA\s_{j}=0
	\end{equation} by induction hypothesis. To prove \eqref{aconjugatesearchdirec} observe that for $j<\hat{k}$ \begin{equation}
	\begin{split}
	& \s_{\hat{k}+1}^TA\s_{j}=  -\lambda_{\hat{k}+1}\g_{\hat{k}+1}^TH_{\hat{k}+1}\y_j= \\
	& -\lambda_{\hat{k}+1}\g_{\hat{k}+1}^T((I-\rho_{\hat{k}} \s_{\hat{k}} \y_{\hat{k}}^T) \tilde{H}_{\hat{k}} (I-\rho_{\hat{k}} \y_{\hat{k}} \s_{\hat{k}}^T)+\rho_{\hat{k}} \s_{\hat{k}} \s_{\hat{k}}^T)\y_j =\\
	&
	-\lambda_{\hat{k}+1}\g_{\hat{k}+1}^T\tilde{H}_{\hat{k}}\y_j=-\beta_{\hat{k}}\lambda_{\hat{k}+1}\g_{\hat{k}+1}^T{H}_{0}\y_j=0 
	\end{split}
	\end{equation}
	\noindent	where the third equality follows observing that $\g_{\hat{k}+1}^T\s_{\hat{k}}=0$ and that $\s_{\hat{k}}^T\y_{j}=0$ for $j<\hat{k}$ by induction hypothesis; the fourth equality follows by \eqref{h0condion}; the last equality follows observing that, since $\g_{\hat{k}+1}^T\s_{\hat{i}}=0 \hbox{ for all } j=0,\dots, \hat{k}$ and $\Span\{\s_0,\dots,\s_{\hat{k}}\}=\Span\{H_0\g_0,\dots,H_0\g_{\hat{k}}\}$ by induction hypothesis, it holds that
	\begin{equation} \label{gradientH0}
	\g_{\hat{k}+1}^TH_0\g_{j}=0 \hbox{ for all } j=0, \dots, \hat{k}.
	\end{equation}
	{Now let us consider the case $j=\hat{k}$. Since $\s_{\hat{k}+1}=-\lambda_{\hat{k}+1}H_{\hat{k}+1}\g_{\hat{k}+1}$, by direct computation using the definition of $H_{\hat{k}+1}$,} it can be verified that $\s_{\hat{k}+1}^TA\s_{\hat{k}}=\s_{\hat{k}+1}^T\y_{\hat{k}}=0.$
	{Let us prove now \eqref{spansearchdirec} : we have \begin{equation}
		\begin{split}
		&\s_{\hat{k}+1}=-\lambda_{\hat{k}+1}H_{\hat{k}+1}\g_{\hat{k}+1}=-\lambda_{\hat{k}+1}\tilde{H}_{\hat{k}} \g_{\hat{k}+1}+ \lambda_{\hat{k}+1}\rho_{\hat{k}}\y_{\hat{k}}^T\tilde{H}_{\hat{k}}\g_{\hat{k}+1} \s_{\hat{k}} = \\
		&-\beta_{\hat{k}}\lambda_{\hat{k}+1}{H}_{0}\g_{\hat{k}+1}+ \lambda_{\hat{k}+1}\rho_{\hat{k}}\y_{\hat{k}}^T\tilde{H}_{\hat{k}}\g_{\hat{k}+1} \s_{\hat{k}} 
		\end{split}
		\end{equation}} and hence
	{$$\Span\{H_0\g_0,\dots,H_0\g_{\hat{k}+1}\} = \Span\{\s_0,\dots,\s_{\hat{k}+1}\}$$
		since $\Span\{H_0\g_0,\dots,H_0\g_{\hat{k}}\} = \Span\{\s_0,\dots,\s_{\hat{k}}\}$
		and $\{\s_0,\dots,\s_{\hat{k}+1}\}$ are linearly independent since they are $A$-conjugate. }
\end{proof}

\begin{corollary}
	If {the pd matrices}  $\tilde{H}_k$ satisfy hypothesis of Theorem~\ref{theo:good_choice_bfgs_type}, then Algorithm \ref{alg:bfgstquadratic} generates the same iterates as the Conjugate Gradient method preconditioned with $H_0$ and hence it converges in at most $n$ steps. 
\end{corollary}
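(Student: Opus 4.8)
The plan is to show that the iterates of Algorithm \ref{alg:bfgstquadratic} coincide with those of the conjugate gradient method preconditioned by $H_0$, and then to read off finite termination from the $A$-conjugacy of the search directions. The whole argument rests on the three conclusions \eqref{gradientsearchdirection}, \eqref{aconjugatesearchdirec} and \eqref{spansearchdirec} of Theorem~\ref{theo:good_choice_bfgs_type}, which I regard as encoding the defining features of preconditioned CG (PCG).

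First I would establish the equivalence by induction on the step index, comparing Algorithm \ref{alg:bfgstquadratic} run from $\x_0$ with PCG run from the same $\x_0$ and the same preconditioner $H_0$. Assuming the iterate $\x_j$ agrees with the PCG iterate for $j \leq k$, the gradients $\g_j = A\x_j - \b$ agree as well. Because the directions $\s_0,\dots,\s_k$ are mutually $A$-conjugate by \eqref{aconjugatesearchdirec} and the line search is exact, Algorithm \ref{alg:bfgstquadratic} is a conjugate-direction method, so the expanding-subspace theorem identifies $\x_{k+1}$ as the unique minimizer of the strictly convex quadratic $f$ over $\x_0 + \Span\{\s_0,\dots,\s_k\}$. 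By \eqref{spansearchdirec} this equals $\x_0 + \Span\{H_0\g_0,\dots,H_0\g_k\}$, and a short auxiliary induction, using $\g_{j+1} = \g_j + A\s_j$ together with \eqref{spansearchdirec}, shows this span coincides with the preconditioned Krylov subspace $\mathcal{K}_{k+1}(H_0 A, H_0\g_0)$. Since PCG minimizes $f$ over exactly that affine subspace and the minimizer of a strictly convex $f$ over an affine set is unique, $\x_{k+1}$ coincides for the two methods, closing the induction.

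Finite termination then follows cheaply. As long as the algorithm has not stopped, the directions $\s_0,\s_1,\dots$ are nonzero and mutually $A$-conjugate, hence linearly independent in $\mathbb{R}^n$, so there can be at most $n$ of them. Concretely, after at most $n$ steps the vectors $\s_0,\dots,\s_{n-1}$ form a basis of $\mathbb{R}^n$, and \eqref{gradientsearchdirection} gives $\g_n^T\s_j = 0$ for every $j = 0,\dots,n-1$, forcing $\g_n = 0$. Thus the method reaches the minimizer in at most $n$ steps.

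The main obstacle I anticipate is the bookkeeping needed to certify that the spanning set in \eqref{spansearchdirec} really is the preconditioned Krylov subspace characterizing PCG, and to pin down precisely which variant of PCG the statement intends (i.e.\ what ``preconditioned with $H_0$'' means), so that the optimality characterization applies verbatim. Everything else is either a direct invocation of Theorem~\ref{theo:good_choice_bfgs_type} or the standard uniqueness of minimizers of strictly convex quadratics over affine subspaces.
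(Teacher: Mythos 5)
Your argument is correct and follows essentially the same route as the paper: both rest entirely on the three conclusions \eqref{gradientsearchdirection}, \eqref{aconjugatesearchdirec} and \eqref{spansearchdirec} of Theorem~\ref{theo:good_choice_bfgs_type} as the characterization of preconditioned CG. The only difference is that the paper delegates the remaining work to Corollary 2.3 of \cite{kolda1998bfgs}, whereas you carry it out explicitly via the expanding-subspace minimization property and the identification of $\Span\{H_0\g_0,\dots,H_0\g_k\}$ with the preconditioned Krylov subspace, which is a legitimate and self-contained way to fill in that citation.
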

\begin{proof}
	Analogous to the proof of Corollary 2.3 in \cite{kolda1998bfgs}, observing that under hypothesis of Theorem~\ref{theo:good_choice_bfgs_type} conditions   \eqref{gradientsearchdirection}, \eqref{aconjugatesearchdirec} and \eqref{spansearchdirec} hold for Algorithm \ref{alg:bfgstquadratic}.
\end{proof}

\noindent Interestingly enough, using the above corollary it can be shown that the iterates of Algorithm \ref{alg:bfgstquadratic} coincide with those from $BFGS$ and $L$-$BFGS$ since they all coincide with the Preconditioned Conjugate Gradient (see \cite{nazareth1979relationship,kolda1998bfgs}). \\
We can now prove that the convergence condition \eqref{directionpreserving} and the quadratic termination condition \eqref{h0condion} can be verified simultaneously if $\tilde{B}_k^{-1}=\tilde{H}_k=\mathcal{L}_{B_k}^{-1}$ {provided that $H_0$ in  \eqref{h0condion} is a multiple of the identity}.

\begin{lemma} \label{lem:threehouseholder}
	{For any pair of vectors $\s_k$, $\g_{k+1}$ and pd matrix $B_k$ generated by Algorithm \ref{alg:bfgstquadratic} with $H_0=I$, there exists a low complexity orthogonal matrix ${L}_{k}$ and hence a matrix algebra $\mathcal{L}^{(k)}= \sd L_{k}$ such that}
	\begin{equation}\label{eq:quadratic_termination_condition}
	\begin{split}
	\mathcal{L}^{(k)}_{B_{k}}\s_{k}&={B_{k}}\s_{k}, \\
	\mathcal{L}^{(k)}_{B_{k}}\g_{k+1}&=\alpha_k\g_{k+1} \hbox{ for some } \alpha_k \neq 0.
	\end{split}
	\end{equation}
	\noindent $L_k$ can be effectively constructed as the product of at most three Householder matrices.	
\end{lemma}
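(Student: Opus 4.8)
The plan is to construct the orthogonal matrix $L_k$ so that the algebra $\mathcal{L}^{(k)}=\sd L_k$ simultaneously preserves the action of $B_k$ on $\s_k$ and keeps $\g_{k+1}$ as an eigenvector. The two conditions in \eqref{eq:quadratic_termination_condition} are of different types: the first is exactly Problem \ref{TNLP} solved by Theorem~\ref{directionpreservingprojection} with $m=2$, while the second is the eigenvector-preserving condition handled by Lemma~\ref{lemma:eigenvector_tnlp_solution}. The key idea is that both are imposed solely by prescribing \emph{columns} of $L_k$: by the column formula \eqref{eq:directionequality}, fixing two columns of $L_k$ to span $\langle \s_k, B_k\s_k\rangle$ forces $\mathcal{L}^{(k)}_{B_k}\s_k = B_k\s_k$, while by the argument in Lemma~\ref{lemma:eigenvector_tnlp_solution}, placing $\g_{k+1}/\|\g_{k+1}\|$ among the columns forces $\mathcal{L}^{(k)}_{B_k}\g_{k+1}=\alpha_k\g_{k+1}$ with $\alpha_k=\g_{k+1}^TB_k\g_{k+1}/\|\g_{k+1}\|^2>0$.

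First I would verify that these column requirements are mutually compatible, i.e.\ that one can choose an orthonormal system simultaneously spanning $\langle \s_k, B_k\s_k\rangle$ and containing $\g_{k+1}/\|\g_{k+1}\|$. Here the crucial structural fact is the orthogonality relation $\g_{k+1}^T\s_k=0$ coming from the exact line search (compare \eqref{gradientsearchdirection}), together with the Krylov identity $B_k\s_k = -\lambda_k^{-1}(\g_{k+1}-\g_k)=-\lambda_k^{-1}\y_k$ valid in the $\mathcal{S}$ case of Algorithm~\ref{alg:bfgstquadratic}. Thus the relevant invariant subspace is $\Span\{\s_k, \y_k, \g_{k+1}\}$, which in general has dimension three; I would take an orthonormal basis of this subspace whose first two vectors span $\langle\s_k,B_k\s_k\rangle$ and whose third is proportional to the component of $\g_{k+1}$ orthogonal to that plane (or $\g_{k+1}$ itself when it is already orthogonal). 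Completing this to an orthonormal basis of $\mathbb{R}^n$ by an arbitrary orthonormal basis of the orthogonal complement defines $L_k$ via \eqref{eq:directionequality}, and by the final sentence of Theorem~\ref{directionpreservingprojection} only the three prescribed columns matter for the conclusion.

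With such an $L_k$, I would then check the two identities directly from the projection formula \eqref{eq:proj_expressionref}, exactly as in the short computations proving Lemma~\ref{lemma:eigenvector_tnlp_solution} and the equality $\mathcal{L}_A\v_1=A\v_1$ inside Theorem~\ref{directionpreservingprojection}: because $\s_k$ lies in the span of the first two columns and is $L_k^T$-orthogonal to all later columns, $\mathcal{L}^{(k)}_{B_k}\s_k$ reduces to the action of $B_k$ restricted to that two-dimensional invariant block and equals $B_k\s_k$; because $\g_{k+1}$ is one of the columns (after the orthogonalization above), $\mathcal{L}^{(k)}_{B_k}\g_{k+1}$ collapses to a single diagonal term times $\g_{k+1}$. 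The count of three Householder reflections then follows from Lemma~\ref{lem:householder_fixed_columns} in the Appendix, since at most three columns of $L_k$ are constrained and each constrained column can be realized by one reflection.

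The main obstacle is the compatibility of the two column constraints when $\g_{k+1}$ fails to be orthogonal to the plane $\langle\s_k,B_k\s_k\rangle$: one must ensure that the \emph{exact} eigenvector condition $\mathcal{L}^{(k)}_{B_k}\g_{k+1}=\alpha_k\g_{k+1}$, not merely a preservation of a projected direction, can still be met. This is where the exact-line-search identity $\g_{k+1}^T\s_k=0$ is essential, together with the fact that $\y_k$ (hence $B_k\s_k$) is the only other vector generated; I expect that $\g_{k+1}$ can always be incorporated as a third column because the whole construction lives in the at-most-three-dimensional space $\Span\{\s_k,\y_k,\g_{k+1}\}$, and the diagonalizing rotation $Q$ of the compressed block $V^T B_k V$ used in Theorem~\ref{directionpreservingprojection} can be replaced by any orthogonal map aligning one basis vector with $\g_{k+1}$. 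Making this alignment precise, and confirming $\alpha_k\neq 0$ (which holds since $B_k$ is pd), is the one step that needs genuine care rather than routine verification.
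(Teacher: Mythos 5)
Your overall strategy is the paper's: prescribe columns of $L_k$, use the $m=2$ construction \eqref{eq:directionequality} of Theorem~\ref{directionpreservingprojection} to force $\mathcal{L}^{(k)}_{B_k}\s_k=B_k\s_k$, append $\g_{k+1}/\|\g_{k+1}\|$ as a further column to make it an eigenvector, and invoke Lemma~\ref{lem:householder_fixed_columns} for the count of three reflections. But you stop exactly at the one point that needs proof. You explicitly leave open ``the compatibility of the two column constraints when $\g_{k+1}$ fails to be orthogonal to the plane $\langle\s_k,B_k\s_k\rangle$,'' and your fallback (using the component of $\g_{k+1}$ orthogonal to that plane as the third column) would only give the eigenvector condition for that component, not for $\g_{k+1}$ itself. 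The paper closes this gap by showing that $\g_{k+1}$ \emph{is always} orthogonal to that plane: from the exact line search $\g_{k+1}^T\s_k=0$, and from \eqref{gradientH0} with $H_0=I$ (part of the inductive conclusions of Theorem~\ref{theo:good_choice_bfgs_type}) one has $\g_{k+1}^T\g_k=0$; since the search direction gives $B_k\s_k=-\lambda_k\g_k$, the two relations together yield $\g_{k+1}\perp\langle\s_k,B_k\s_k\rangle$, so $\g_{k+1}/\|\g_{k+1}\|$ can be taken as one of the vectors $\q_i$ in \eqref{eq:directionequality} and $\alpha_k=\g_{k+1}^TB_k\g_{k+1}/\|\g_{k+1}\|^2>0$ by positive definiteness.

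Moreover, the identity you rely on, $B_k\s_k=-\lambda_k^{-1}\y_k$, is wrong and would actively block the argument. For the quadratic problem $\y_k=A\s_k$ involves the \emph{true} Hessian $A$, whereas $B_k\s_k$ involves the current approximation; the correct relation, coming from $\d_k=-B_k^{-1}\g_k$, is $B_k\s_k=-\lambda_k\g_k$. Had your identity been true, one would get $\g_{k+1}^T(B_k\s_k)\propto\g_{k+1}^T\y_k=\|\g_{k+1}\|^2-\g_{k+1}^T\g_k=\|\g_{k+1}\|^2\neq 0$, i.e.\ $\g_{k+1}$ would \emph{never} be orthogonal to the plane and the construction would fail --- which is presumably why the obstacle looked genuine to you. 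Replacing $\y_k$ by $\g_k$ and citing \eqref{gradientH0} resolves it. A minor further point: the paper also splits off the degenerate case $B_k\s_k=\gamma\s_k$, where the plane collapses to a line and two Householder reflections suffice (via Lemma~\ref{lemma:eigenvector_tnlp_solution}); your write-up does not treat this case, though it is the easier one.
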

\begin{proof}{For the sake of simplicity we use, in the following, the symbols $L$ and $\mathcal{L}$ in place of $L_{k}$ and $\mathcal{L}^{(k)}$.} 
	\begin{enumerate}
		\item Case $B_k\s_k = \gamma \s_k$. \\ 	From Theorem~\ref{theo:good_choice_bfgs_type} we have $\g_{k+1}^T\s_k=0$. Any orthogonal matrix $L$ which has among its columns $\s_k/\|\s_k\|$ and $\g_{k+1}/\|\g_{k+1}\|$ is such that, defining $\mathcal{L}=\sd L$, $\mathcal{L}_{B_k}$ satisfies conditions in \eqref{eq:quadratic_termination_condition} (the columns of $L$ are eigenvectors of any matrix in $\mathcal{L}$). 
		One of such orthogonal matrix $L$ can be constructed as the product of two orthogonal Householder matrices ({see Lemma~\ref{lem:householder_fixed_columns} in Appendix and see \cite{CDZarnoldipreserver} for more details}).
		\item {Case $B_k\s_k \neq \gamma \s_k$.\\} 
		Any matrix $L$ in \eqref{eq:directionequality} with $m=2$ satisfies $\mathcal{L}_{B_k}\s_k=B_k\s_k$ if $\mathcal{L}=\sd L$; it is then enough to consider a  matrix $L$  where $\g_{k+1}/\|\g_{k+1}\|$ is chosen to be one of the vectors $\q_i$; observe that this can be done since, from Theorem~\ref{theo:good_choice_bfgs_type},  $\g_{k+1}^T\s_k=0=\g_{k+1}^T\g_k$ (see \eqref{gradientH0} with $H_0=I$) and {since} the first two columns of $L$ in \eqref{eq:directionequality}, namely $V_2Q\e_1 \hbox{ and } V_2Q\e_2$, are suitable linear combinations of $\s_k$ and $B_k\s_k=-\lambda_k \g_k$ {(see the proof of Theorem~\ref{directionpreservingprojection} with $m=2$ and $\s_k$, $B_k$ in the roles of $\s$ and $A$ respectively)}.  
		{An orthogonal matrix ${L}$ with three columns fixed as $V_2Q\e_1, \; V_2Q\e_2$ and $\g_{k+1}/\|\g_{k+1}\|$, can be constructed as the product of three orthogonal Householder matrices ({see Lemma~\ref{lem:householder_fixed_columns} in Appendix and see \cite{CDZarnoldipreserver} for more details}).}%
	\end{enumerate}	
\end{proof} 
\section{A convergent $\mathcal{L}^{(k)}$QN method with quadratic termination property} \label{sec:theproposal}
The $\mathcal{L}^{(k)}QN$ scheme that we consider {in this section} combines the results obtained in Section \ref{sectionsconvergence} for the $\mathcal{S}$ecant scheme with $\phi=0$ and in Section \ref{sec:finite_quadratic_termination} for quadratic termination, setting in both $\tilde{B}_k=\mathcal{L}^{(k)}_{{B}_k}$. In particular it combines the convergence result stated in Theorem~\ref{Sconvergence} for general non linear problems with the  quadratic termination result obtained in Theorem~\ref{theo:good_choice_bfgs_type}. The main motivation for this choice can be traced {to the key observation that in this way the resulting method coincides}, as already pointed out in Section \ref{sec:finite_quadratic_termination}, with $BFGS$ and $L$-$BFGS$ when applied on quadratic problems using exact line search. 
{\subsection{The proposed method}}
\IncMargin{1em}
\begin{algorithm}[H]
	\LinesNumbered
	
	\SetKwData{Left}{left}\SetKwData{This}{this}\SetKwData{Up}{up}
	
	\SetKwFunction{Union}{Union}\SetKwFunction{FindCompress}{FindCompress}
	
	\SetKwInOut{Input}{input}\SetKwInOut{Output}{output}
	
	\KwData{$\x_0\,\in \mathbb{R}^n, B_0=I$, $toll$,
		$\d_0=-\g_0$, $k=0$; \\}
	
	\While{$\g_k\neq 0$}{

		$\x_{k+1}=\x_k+\lambda_k \d_k$ \tcc*[r]{$\lambda_k$ verifies conditions \eqref{AG1}, \eqref{AG2}}
		$\s_k=\x_{k+1}-\x_k $\;
		$\y_k=\g_{k+1}-\g_k $\;
		\tcc{\footnotesize{Defininition of the new algebra $\mathcal{L}^{(k)}$}} 
		
		{\eIf{$\|B_k\s_k-\frac{\s_k B_k \s_k}{\|\s_k\|^2}\s_k\| < toll$ \nllabel{algline:if_cond}}{
				Define $\overline{\g}_{k+1}$ as the projection of $\g_{k+1}$ on  $<\s_k>^\perp$ \; 
				Define $U_k$ using Case 1. in Lemma~\ref{lem:threehouseholder} \nllabel{algline:1if}  \;
			}
			{Define $\overline{\g}_{k+1}$ as the projection of $\g_{k+1}$ on  $<\s_k,B_k\s_k>^\perp$ \; 
				Define $U_k$ using Case 2. in Lemma~\ref{lem:threehouseholder} \nllabel{algline:generic_cond}  \;
		}}
		Compute $\mathcal{L}^{(k)}_{B_k} $ \nllabel{algline:compute_proj2} \;
		\tcc{\footnotesize{$\mathcal{L}^{(k)}:= \sd U_k$ verifies $\mathcal{L}^{(k)}_{{B}_k}\s_k=B_k\s_k$ and $\mathcal{L}^{(k)}_{{B}_k} \overline{\g}_{k+1}=\alpha_k \overline{\g}_{k+1}$}}	
		$B_{k+1}= \Phi(\mathcal{L}^{(k)}_{{B}_k}, \s_k,\y_k, 0)$ \;
		Compute $\d_{k+1}=-B_{k+1}^{-1}\g_{k+1}$\;
		Set $k:=k+1$ \;		
	}
	\caption{{A convergent $\mathcal{L}^{(k)}$QN method with quadratic termination property verified if exact line search is used.}}{\label{convergentlkqn}}
\end{algorithm}\DecMargin{1em}

\noindent Observe that the applicability of Lemma~\ref{lem:threehouseholder}, and hence the existence of the orthogonal matrices $U_k$ at lines \ref{algline:1if} and \ref{algline:generic_cond} of Algorithm \ref{convergentlkqn}, are guaranteed by the definition of $\overline{\g}_{k+1}$. Indeed, in Lemma~\ref{lem:threehouseholder}, where $f$ is quadratic, $\g_{k+1}$ is orthogonal to $\s_k$ and to $B_k\s_k$. When $f$ is not quadratic, $\g_{k+1}$ has to be replaced by the vector $\overline{\g}_{k+1}$ which is, by construction, orthogonal to both $\s_k$ and $B_k\s_k$.
\noindent  In particular, to perform  Line \ref{algline:generic_cond}  of  Algorithm~\ref{convergentlkqn}, one computes the projection of ${\g}_{k+1}$ on the space $<\s_k, B_k\s_k>^{\perp}$, that is, $\overline{\g}_{k+1}:=(I-VV^T)\g_{k+1}$ being $V:=[\v_1|\v_2]$ an orthonormal basis of $<\s_k,B_k\s_k>$, and then   
apply Lemma~\ref{lem:threehouseholder} to $B_k=\Phi(\mathcal{L}^{(k-1)}_{{B}_{k-1}}, \s_{k-1},\y_{k-1}, 0)$, $\s_k$ and $\overline{\g}_{k+1}$, to obtain $U_k:=\mathcal{H}(\h^{(k)}_3)\mathcal{H}(\h^{(k)}_2)\mathcal{H}(\h^{(k)}_1)$ (see, moreover, Lemma~\ref{lem:householder_fixed_columns} in the Appendix). For Line \ref{algline:1if}, proceed analogously; in this case $U_k:=\mathcal{H}(\h^{(k)}_2)\mathcal{H}(\h^{(k)}_1)$. Regarding {Line \ref{algline:compute_proj2}} observe that, as in Algorithm \ref{convergentlkqn_no_fq}, they consist in computing the eigenvalues of $\mathcal{L}_{B_k}^{(k)}$ by \eqref{eq:update_eigenvalues}.
\subsection{{Complexity}}
{An analogous analysis as in Section \ref{sec:complexity_no_qt} permits to infer that the computational complexity of Algorithm \ref{convergentlkqn} is $O(n)$ in space and time.}
Assuming that the matrices $U_k$ are always constructed according to Line~\ref{algline:generic_cond} of Algorithm \ref{convergentlkqn}, a straightforward implementation of Algorithm \ref{convergentlkqn} requires roughly  $120n$ multiplications and the storage of $17$ vectors of length $n$.
\section{Numerical Results} \label{sec:numerical_results}
In our numerical experimentation we have used performance profiles (see \cite{dolan2002benchmarking}) in order to investigate and compare the numerical behavior of {Algorithm \ref{convergentlkqn_no_fq} with $\phi=0$ (refinement of the method introduced in \cite{CDTZ}), Algorithm \ref{convergentlkqn},  $\mathcal{D}QN$ \cite{CCD}, $\mathcal{H}QN$ \cite{BDCFZ,DFLZ} and $L$-$BFGS$ with $M=5$ and $M=30$ \cite{LN}{. The latter method, that has been implemented by the Poblano toolbox \cite{dunlavy2010poblano}, has a computational cost of roughly $4Mn$ multiplications and requires the storage of $(4M+2$ vectors to be implemented. We have tested the algorithms on a set of medium/large scale problems  using the line-search routine provided in Poblano}, i.e., the Mor\'e-Thuente cubic interpolation line search (which implements the Strong-Wolfe conditions) {enforcing the reproducibility of our results}. In order to make a fair comparison we have used for all the algorithms the same stopping criteria as those from Poblano. The results have been obtained on a laptop running Linux with 16Gb memory and CPU Intel(R) Core(TM) {i7-8th generation} CPU with clock 2.00GHz. The scalar code is written and executed in {MATLAB R2018b}. We have used the following parameters where the names of the variables are the same as those from Poblano {(LineSearch\_ftol=$\alpha$ in \eqref{AG1}  and LineSearch\_gtol=$\beta$ in \eqref{AG2})} :
	
	\begin{lstlisting}[style=Matlab-editor]
	LineSearch_xtol=1e-15;
	LineSearch_ftol=1e-4; 
	LineSearch_gtol=0.9; 
	LineSearch_stpmin=1e-15;
	LineSearch_stpmax=1e15;
	LineSearch_maxfev=20;
	StopTol=1e-6;
	MaxIters=10000;
	MaxFuncEvals=50000;
	RelFuncTol=1e-20.
	\end{lstlisting}
	
	\noindent Finally,	let us point out that, as in Poblano, the successful termination is achieved when $\|g_k\|_2/n\leq StopTol$ being $n$ the dimension of the problem.  
	
	{ In all the following  Figures   ``$\mathcal{L}^{(k)}$QN Sc'' and ``$\mathcal{L}^{(k)}$QN'' indicate Algorithm \ref{convergentlkqn_no_fq} using, respectively, scaling as in Remark \ref{remark:scaling} or not. Analogously, ``$\mathcal{L}^{(k)}$QN(q.t.)~Sc'' and ``$\mathcal{L}^{(k)}$QN(q.t.) '' indicate Algorithm~\ref{convergentlkqn} using, respectively, scaling as in Remark \ref{remark:scaling} or not.}
	
	\subsubsection{Experiment 1}
	In this experiment we have chosen a problem set from CUTEst \cite{gould2015cutest} where $L$-$BFGS$ performs poorly. See Table \ref{problem_set} for the complete list of considered problems. In Figure \ref{fig:experiment1}  we show, using a logarithmic scale, the performance profiles of the selected solvers. 
	
	\begin{figure}[ht!]
		\centering
		{\includegraphics[width=\columnwidth]{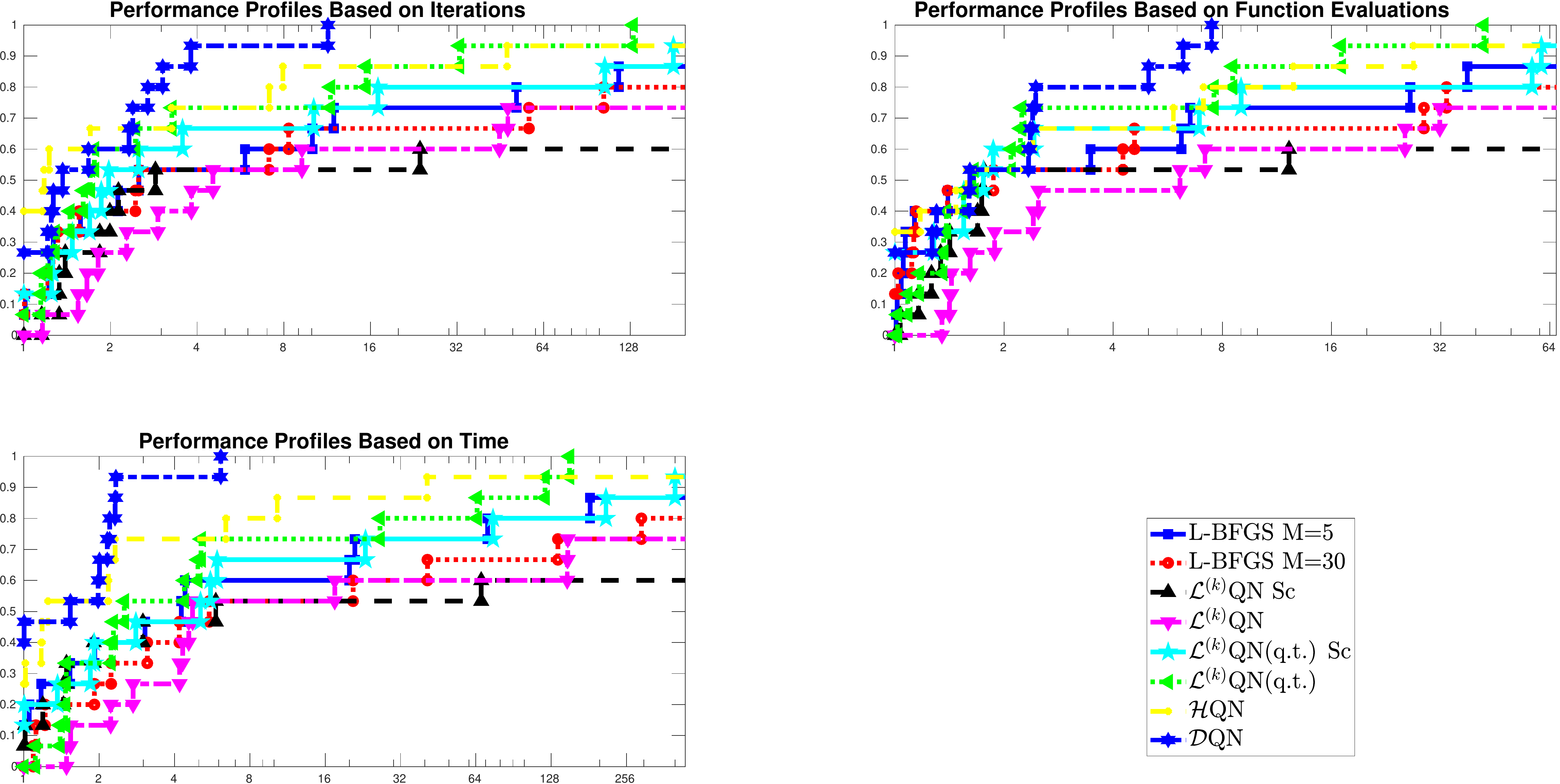}}
		\caption{Performance profiles for Algorithm \ref{convergentlkqn_no_fq}, Algorithm \ref{convergentlkqn}, $\mathcal{D}QN$ \cite{CCD}, $\mathcal{H}QN$ \cite{BDCFZ,DFLZ} and $L$-$BFGS$ with $M=5$ and $M=30$ \cite{LN} on a set of 14  problems from CUTEst \cite{gould2015cutest}. LineSearch\_ftol=1e-4; LineSearch\_gtol=0.9;}\label{fig:experiment1}
	\end{figure}

	\begin{table}[htb]
		\centering
		\caption{Problem Set}
		\label{problem_set}
		\begin{tabular}{lll}
			\textbf{Prob} & \textbf{Dim.} & \textbf{N.Z.}\\  
			1] BROYDN7D & 5000 & 17497 \\  
			2] CHAINWOO & 10000 & 19999 \\  
			3] CURLY10 & 1000 & 10945 \\ 
			4] EIGENBLS & 2550 & 3252525 \\ 
			5] EIGENCLS & 2652 & 3517878 \\  
			6] GENHUMPS & 5000 & 9999 \\ 
			7] GENROSE & 500 & 999 \\ 
		\end{tabular}\begin{tabular}{lll}
			\textbf{Prob} & \textbf{Dim.} & \textbf{N.Z.}\\  
			8] MODBEALE & 20000 & 39999 \\ 
			9] MSQRTALS & 4900 & 12007450 \\ 
			10] MSQRTBLS & 4900 & 12007450 \\ 
			11] NONCVXU2 & 10000 & 39987 \\ 
			12] SBRYND & 1000 & 6979 \\ 
			13] TESTQUAD & 1000 & 1000 \\ 
			14] TRIDIA & 5000 & 9999 \\  
		\end{tabular}
	\end{table}

	\subsubsection{Experiment 2}
	In this experiment we have investigated the problem of approximating a given matrix $A \in A^{m \times n}$ by a rank-$k$ approximation of the form $UV^T$, i.e., the function we wish to optimize is 
	\begin{equation} \label{eq:rank_problem}
	\min _{U \in \mathbb{R}^{m \times k}, V \in \mathbb{R}^{n \times k}}\|A-UV^T\|_F^2.
	\end{equation}
	
	\noindent Problem \eqref{eq:rank_problem} arises in may applications (see for example \cite{elden2006numerical} for applications connected with data mining). 
	In particular, we focus on the dimensionality reduction problem ($(m+n)k << mn$) for MINST database \cite{lecun}. The MINST test-set contains $10000$ labeled handwritten digits from $0$ to $9$ stored as $28 \times 28$ matrices. For each class, we solve problem \eqref{eq:rank_problem} where $A$ is a $m \times n = 28^2  \times \, class$-$size$, being $class$-$size$ the number of examples contained in the dataset for the considered digit. In Figures \ref{fig:experiment3} and \ref{fig:experiment4}  we show, using a logarithmic scale, the performance profile of the selected solvers when $k=2^6$ and $k=2^7$. For details on the choice of the parameters see the preliminaries of this section; we use as $\x_0$ a random vector. In Table \ref{minst_dimensions} we report the dimensions of the involved problems.

	\begin{figure}[ht!]
		\centering
		{\includegraphics[width=\columnwidth]{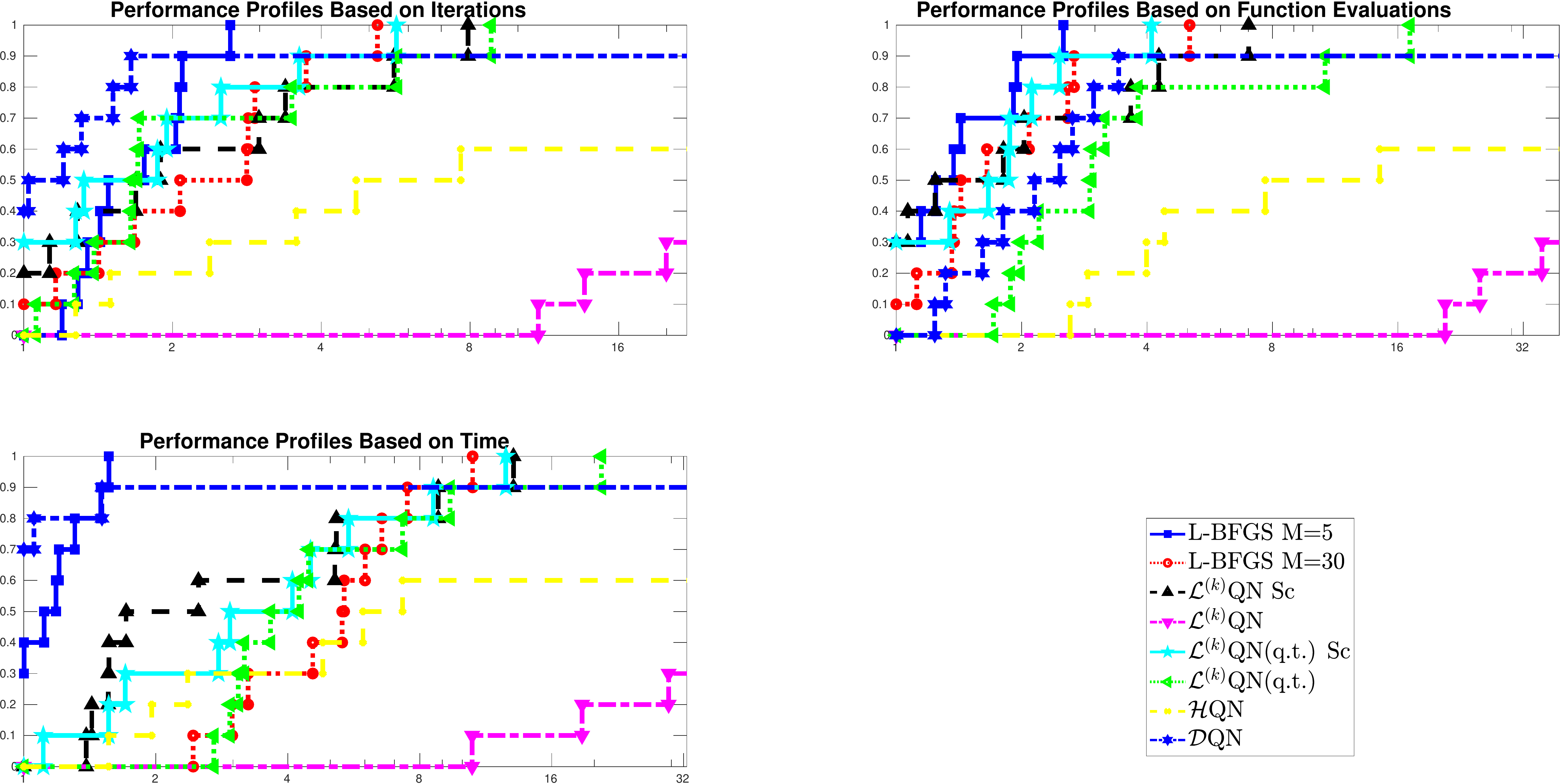}} 
		\caption{Performance profiles for Algorithm \ref{convergentlkqn_no_fq}, Algorithm \ref{convergentlkqn}, $\mathcal{D}QN$ \cite{CCD}, $\mathcal{H}QN$ \cite{BDCFZ,DFLZ} and $L$-$BFGS$ with $M=5$ and $M=30$ \cite{LN} when $k=2^6$. LineSearch\_ftol=1e-4; LineSearch\_gtol=0.9.}\label{fig:experiment3}
	\end{figure}
	
	\begin{figure}[ht!]
		\centering
		{\includegraphics[width=\columnwidth]{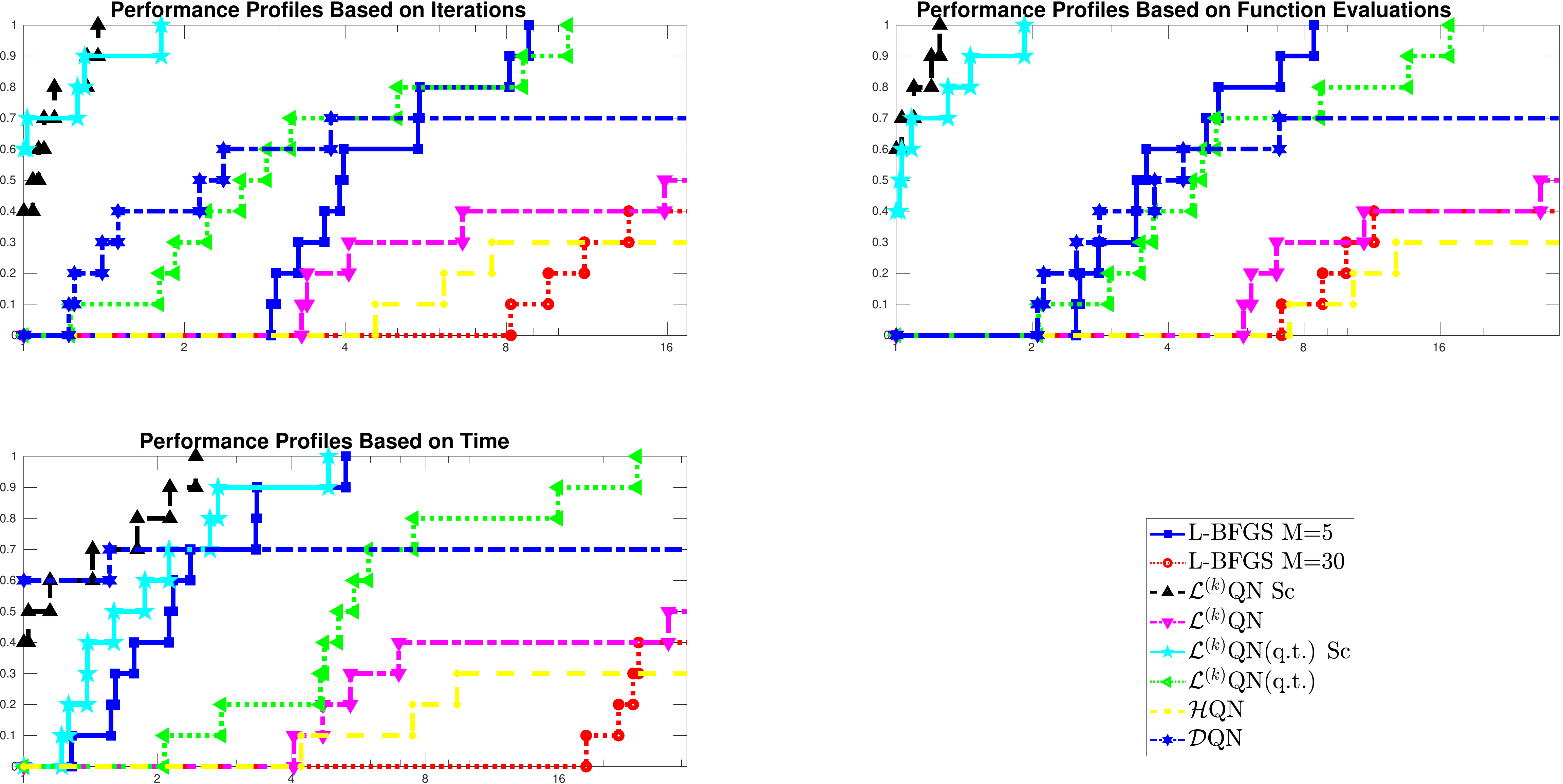}}
		\caption{Performance profiles for Algorithm \ref{convergentlkqn_no_fq}, Algorithm \ref{convergentlkqn}, $\mathcal{D}QN$ \cite{CCD}, $\mathcal{H}QN$ \cite{BDCFZ,DFLZ} and $L$-$BFGS$ with $M=5$ and $M=30$ \cite{LN} when $k=2^7$. LineSearch\_ftol=1e-4; LineSearch\_gtol=0.9.}\label{fig:experiment4}
	\end{figure}

	\begin{table}[htb]
		\centering
		\footnotesize
		\caption{MINST factorization: problems dimensions}
		\label{minst_dimensions}
		\begin{tabular}{lllllllllll}
			Class/Rank & $0$      & $1$      & $2$      & $3$      & $4$      & $5$      & $6$      & $7$      & $8$      & $9$      \\ \hline
			$k=2^6$    & $112896$ & $122816$ & $116224$ & $114816$ & $113024$ & $107264$ & $111488$ & $115968$ & $112512$ & $114752$ \\ \hline
			$k=2^7$    & $225792$ & $245632$ & $232448$ & $229632$ & $226048$ & $214528$ & $222976$ & $231936$ & $225024$ & $229504$\\ \hline
		\end{tabular}
	\end{table}

	\subsection{Conclusions and future works} \label{sec:conclusion_and_future}
	
	In this work we have {proposed}  and studied the convergence of novel optimization scheme{s} $\mathcal{L}^{(k)}$QN obtained by generalizing the updates in the restricted Broyden class by means of projections of the Hessian approximations $B_k$ on adaptive low complexity matrix algebras $\mathcal{L}^{(k)}$, {and in particular, we have studied in detail two new $BFGS$-type methods with theoretical guarantee of convergence}. 
	
	{The finite quadratic termination is not really relevant for general Quasi-Newton methods \cite{kolda1998bfgs}. However, the numerical results presented in the previous subsections, see ``$\mathcal{L}^{(k)}$QN'' and ``$\mathcal{L}^{(k)}$QN(q.t)'' in Figures \ref{fig:experiment1},\ref{fig:experiment3} and \ref{fig:experiment4}, confirm that if this property is added to $BFGS$-type algorithms, as in Algorithm \ref{convergentlkqn}, then we succeed in improving the performances of the basic  $\mathcal{L}^{(k)}$QN scheme in Algorithm \ref{convergentlkqn_no_fq}, { which is a convergent refinement of the methods considered in  \cite{CDTZ}}.}
	
	Moreover the numerical results show that, by an adaptive choice of the matrix algebras $\mathcal{L}^{(k)}$, the robustness of the existing fixed algebras $\mathcal{L}QN$ methods, {$\mathcal{D}QN$  and  $\mathcal{H}QN$,} can be overcome {(see Figures \ref{fig:experiment3} and \ref{fig:experiment4})}, even though this does not guarantee the best performance in terms of Iteration, Function Evaluations or Execution Time (see Figure \ref{fig:experiment1}). Notice, moreover, that the methods $\mathcal{D}$QN and $\mathcal{H}$QN \cite{BDCFZ,CCD,ebrahimib} are competitive for other classes of problems. 
	
	Now, in Experiment 1, the comparison of  the proposed $\mathcal{L}^{(k)}$QN methods is not totally favorable. In fact, Figure \ref{fig:experiment1} shows that the best performers are {$\mathcal{D}QN$  and  $\mathcal{H}QN$}. However, the improved robustness of our proposal, already traceable in Experiment 1, is further underpinned by Experiment 2, where Algorithm \ref{convergentlkqn} always reaches the required level of accuracy within the maximum number of allowed iterations, whereas $L$-$BFGS$ with $M=5$ and $M=30$ drastically changes its behavior when switching from rank $2^6$ to rank $2^7$. In this experiment, a straightforward implementation of our proposals does not guarantee  to outperform  $L$-$BFGS$ with $M=5$. 
	
	However, on this set of problems, the efficiency of our proposals is dramatically improved by introducing 
	a self-scaling factor as outlined in Remark \ref{remark:scaling}. In this case, see ``$\mathcal{L}^{(k)}$QN Sc'' and ``$\mathcal{L}^{(k)}$QN(q.t) Sc'' in Figure \ref{fig:experiment4}, our proposals clearly outperform $L$-$BFGS$ with $M=5$ and $M=30$. 
	
	It is important to note that our proposals dot not require the choice of a problem dependent parameter as $M$ in $L$-$BFGS$ and that,  in general, if $M$ is big, require less memory to be implemented. 
	
	By the above reasons, further investigation urges in order to understand if the new method could be a valid competitor of $L$-$BFGS$, in particular for those problems where large values of the parameter $M$ must be chosen  in order to guarantee satisfactory performances  (see also \cite{jiang2004preconditioned}) or for those problems where the computation of the gradient is expensive, as those coming from data science or optimal control (see, for example, \cite{bottou2018optimization,cipolla2018fractional}). 
	
	{It is clear that $\mathcal{L}^{(k)}QN$ methods should be also compared with the class of nonlinear conjugate gradient methods. Moreover, it would be important to understand} if the matrices generated by means of our Quasi Newton-type updates could be useful as preconditioners for nonlinear conjugate gradient methods as in \cite{andrea2017novel}. Of course, further 
	investigation should be devoted, in future, in order to understand if
	the Broyden Class-version of Algorithm~\ref{convergentlkqn_no_fq} or Algorithm~\ref{convergentlkqn} can produce better performances for $\phi \in (0,1)$. {Last but not least, it could be interesting to understand  if the results presented in this paper can be extended to the modified $BFGS$ method for non-convex functions as in \cite{li2001modified}. Finally the connections with Quasi-Newton Self-Scaling methods \cite{nocedal1993analysis,al1998global} should be further explored.
		
		\section*{Acknowledgments}
		We would like to thank the referees for their thorough reading of the manuscript, valuable suggestions and for pointing to relevant typos.
		
		\section{{Appendix 1: Householder Matrices}}\label{appendix} The results contained in this section are borrowed from \cite{CDZarnoldipreserver} and we refer the interested reader there for more details.
		\begin{definition}[Householder Orthogonal Matrix] \label{onehouse}
			Given a vector $\p \in \mathbb{R}^n$ define 
			$$\mathcal{H}(\p):=I_n-\frac{2}{\|\p\|^2}\p\p ^T.$$
			Consider two vectors $\v,\, \z\in \mathbb{R}^n$. From direct computation one can check that defining $\p= \v - \frac{\|\v\|}{\|\z\|}\z$ with $\z \neq 0,$ we have 
			$$\mathcal{H}(\p)\v=\frac{\|\v\|}{\|\z\|}\z.$$  
		\end{definition}
		\begin{lemma}[\cite{CDZarnoldipreserver}] \label{lem:householder_fixed_columns}
			Consider $W=[\w_1|\dots|\w_s] \in \mathbb{R}^{n\times s}, V=[\v_1|\dots|\v_s] \in \mathbb{R}^{n\times s}$ {of full rank and} such that $s \leq n$, $W^TW=V^TV$. Then there exist  $\,\h_1, \dots,\h_{s} \in \mathbb{R}^n$, {$\|\h_i\|=\sqrt{2}$}, such that the orthogonal matrix $U=\mathcal{H}(\h_s)\cdots\mathcal{H}(\h_1)$, product of $s$ Householder matrices, satisfies the following identities  $$U \w_{i}= \v_i \hbox{ for all } i \in \{1, \dots, s\}.$$
			{The vectors $\h_i$ for $i\in\{1,\dots,s\}$ can be obtained by setting:
				\begin{equation} \label{eq:hexpression}
				\begin{split}
				& { \tilde{\h}_i :=  \mathcal{H}(\h_{i-1}) \cdots \mathcal{H}(\h_1) (\w_{i}-\w_{i-1}) - (\v_{i}-\v_{i-1}),}\\
				& \h_i:= ( \sqrt{2}/\|\tilde{\h}_i\| )\tilde{\h}_i
				\end{split}
				\end{equation}
				(where we set $\mathbf{h}_0=\mathbf{w}_0=\mathbf{v}_0=\boldsymbol{ 0}$). } {If $s=n$ we have $\h_n=\mathbf{0}$ or $\h_n=\frac{\sqrt{2}}{\|\v_n\|}\v_n.$}
			The cost of the computation of the $\h_i$ for $i=1,\dots,s$ is:
			$$[s(s-1)n+ s(2n+1)] \hbox{ mult. }  + [(s(s+2) - 2)n + s(n-1)] \hbox{ add. }  + s \hbox{ sq. roots.}$$
			Observe that when $\w_i=\e_{k_i}$ for $i=1,\dots,s$, that is when $\v_1, \dots, \v_s $ are orthonormal and we are interested to construct an orthogonal $U$ with $s$ columns fixed as $\v_1, \dots, \v_s $, it is possible to save $(s-1)n \hbox{ mult.}$ and $ (3s-2)n \hbox{ add..}$
		\end{lemma}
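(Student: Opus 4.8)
The plan is to prove the identities $U\w_i=\v_i$ by an explicit constructive induction on $i$, building the reflections $\mathcal{H}(\h_1),\dots,\mathcal{H}(\h_s)$ one at a time so that, writing $U_i:=\mathcal{H}(\h_i)\cdots\mathcal{H}(\h_1)$, the partial product $U_i$ already sends $\w_j\mapsto\v_j$ for every $j\le i$. For the base case $i=1$, the hypothesis $W^TW=V^TV$ forces $\|\w_1\|=\|\v_1\|$, so the elementary construction recalled in Definition~\ref{onehouse} produces $\h_1$ (proportional to $\w_1-\v_1$) with $\mathcal{H}(\h_1)\w_1=\v_1$; normalizing to $\|\h_1\|=\sqrt2$ is merely a rescaling that turns $\mathcal{H}(\h_1)$ into $I-\h_1\h_1^T$.

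For the inductive step I would assume $U_{i-1}\w_j=\v_j$ for $j<i$ and set $\a_i:=U_{i-1}\w_i$. The heart of the argument is to show that the single reflection needed to move $\a_i$ onto $\v_i$ automatically leaves the previously placed images $\v_j$ ($j<i$) fixed. First I would note $\|\a_i\|=\|\w_i\|=\|\v_i\|$, since $U_{i-1}$ is orthogonal and the Gram matrices agree. Then, using orthogonality of $U_{i-1}$ together with the inductive identity $U_{i-1}\w_j=\v_j$, one obtains $\a_i^T\v_j=\w_i^TU_{i-1}^TU_{i-1}\w_j=\w_i^T\w_j$, while $W^TW=V^TV$ gives $\v_i^T\v_j=\w_i^T\w_j$; subtracting yields $(\a_i-\v_i)^T\v_j=0$ for all $j<i$. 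Consequently, choosing $\h_i$ parallel to $\a_i-\v_i$ with $\|\h_i\|=\sqrt2$ makes $\mathcal{H}(\h_i)=I-\h_i\h_i^T$ fix each $\v_j$, $j<i$, and (because $\|\a_i\|=\|\v_i\|$) reflect $\a_i$ exactly onto $\v_i$; hence $U_i=\mathcal{H}(\h_i)U_{i-1}$ maps $\w_j\mapsto\v_j$ for all $j\le i$, closing the induction. The degenerate subcase $\a_i=\v_i$ is handled by taking $\mathcal{H}(\h_i)=I$.

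To recover the stated closed form I would observe that the direction $\a_i-\v_i$ coincides with $\tilde\h_i$: since $U_{i-1}\w_{i-1}=\v_{i-1}$, one has $U_{i-1}(\w_i-\w_{i-1})-(\v_i-\v_{i-1})=U_{i-1}\w_i-\v_i=\a_i-\v_i$, and $U_{i-1}=\mathcal{H}(\h_{i-1})\cdots\mathcal{H}(\h_1)$, which is exactly the recursion defining $\tilde\h_i$; rescaling to norm $\sqrt2$ gives $\h_i$. For the boundary case $s=n$ the full-rank vectors $\v_1,\dots,\v_{n-1}$ span a hyperplane, and the constraints $\a_n^T\v_j=\v_n^T\v_j$ together with $\|\a_n\|=\|\v_n\|$ pin $\a_n$ to one of the two points where the line orthogonal to that hyperplane meets the sphere of radius $\|\v_n\|$, namely $\v_n$ itself (whence $\tilde\h_n=\mathbf{0}$ and $\h_n=\mathbf{0}$) or its mirror image (whence $\h_n$ points along the hyperplane normal, reducing to $\frac{\sqrt2}{\|\v_n\|}\v_n$ precisely when the $\v_i$ are orthonormal, as in the intended application $\w_i=\e_{k_i}$).

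Finally, the FLOP count is routine bookkeeping: forming $\tilde\h_i$ costs one vector difference plus $i-1$ applications of the earlier reflections (each an inner product and an axpy, hence $O(n)$), while the normalization adds one Euclidean norm (a square root) and a scaling; summing over $i=1,\dots,s$ reproduces the stated tallies, and the savings in the case $\w_i=\e_{k_i}$ follow from the sparsity of the differences $\w_i-\w_{i-1}$. I expect the only genuinely delicate point to be the orthogonality identity $(\a_i-\v_i)\perp\v_j$ in the inductive step, since it is what lets a single reflection simultaneously preserve all earlier images and place the current one; everything else is either Definition~\ref{onehouse} or arithmetic.
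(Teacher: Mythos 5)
Your proof is correct, and it is more than the paper itself offers: the paper's ``proof'' of this lemma is a one-line deferral to the techniques of the cited reference, so there is no internal argument to compare against. Your inductive construction is the standard (and surely the intended) route, and you have isolated exactly the right key point: the identity $(\a_i-\v_i)^T\v_j=\w_i^T\w_j-\v_i^T\v_j=0$ for $j<i$, which combines the orthogonality of $U_{i-1}$ with the Gram condition $W^TW=V^TV$ and is what allows a single reflection to place $\a_i$ onto $\v_i$ while fixing all previously placed images. Your verification that $\a_i-\v_i$ equals the recursively defined $\tilde{\h}_i$ (via $U_{i-1}\w_{i-1}=\v_{i-1}$) is also correct, as is your handling of the degenerate case $\tilde{\h}_i=\mathbf{0}$, which the formula $\h_i=(\sqrt{2}/\|\tilde{\h}_i\|)\tilde{\h}_i$ silently excludes. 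One observation of yours deserves emphasis: for $s=n$ your analysis shows $\tilde{\h}_n$ is parallel to the normal of $\Span\{\v_1,\dots,\v_{n-1}\}$, so the lemma's claim $\h_n=\frac{\sqrt{2}}{\|\v_n\|}\v_n$ is literally valid only when $\v_n$ is orthogonal to the other columns (e.g., the orthonormal case $\w_i=\e_{k_i}$ relevant to the paper's application); you are right to flag this, and it is a genuine imprecision in the statement rather than a gap in your argument. The FLOP count you leave as bookkeeping, which is consistent with the level of detail the paper itself provides.
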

		\begin{proof}
			The explicit expression of the $\h_i$ in \eqref{eq:hexpression} is obtained by applying the techniques for their construction introduced in \cite{CDZarnoldipreserver}.
		\end{proof}
		
		\section{{Appendix 2: details on Theorem \ref{Sconvergence}  }}
		In order to prove inequality \eqref{eqq7 : nw} it is enough to prove that:
		
		\begin{lemma} \label{lemma:appendix_constant}
			There exists $c_3$ constant with respect to $j$ and depending only on $s$ and $M$ such that 
			\begin{equation*}
			\gamma((j+1-s)+1)^{n} \leq c_3^{j+1-s} \hbox{ for all } j \geq s, \hbox{ where } \gamma:=(\frac{c_1}{n})^n\frac{1}{\det B_s}
			\end{equation*}
			(of course, such $c_3$ turns out to be greater than $1$).
		\end{lemma}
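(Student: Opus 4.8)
The plan is to recognize this as the elementary fact that exponential growth dominates any fixed polynomial, but to take care to produce a \emph{single} constant $c_3>1$ that works for every $j\ge s$, not merely asymptotically. First I would substitute $m:=j+1-s$; since $j\ge s$ we have $m\ge 1$, and the claim becomes $\gamma\,(m+1)^n\le c_3^{\,m}$ for all integers $m\ge 1$, where $\gamma=(c_1/n)^n/\det B_s$ is a fixed positive number.

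The key step is to linearize the polynomial factor by means of the standard inequality $1+m\le e^m$ (valid for all $m\ge 0$), which yields $(m+1)^n\le e^{nm}=(e^n)^m$. Hence $\gamma\,(m+1)^n\le \gamma\,(e^n)^m$, and it only remains to absorb the leading constant $\gamma$ into the base of the exponential.

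To do this uniformly in $m$ I would set $c_3:=e^n\max\{\gamma,1\}$ and argue by cases. If $\gamma\ge 1$, then $\gamma\le\gamma^m$ for every $m\ge 1$, so $\gamma\,(e^n)^m\le(\gamma e^n)^m=c_3^{\,m}$; if $\gamma<1$, then $\gamma\,(e^n)^m\le(e^n)^m=c_3^{\,m}$. In either case the desired inequality holds for all $m\ge 1$, and $c_3>1$ because $e^n>1$ and $\max\{\gamma,1\}\ge 1$, as asserted in the statement.

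Finally I would record the dependence of the constant: recalling from \eqref{eq:referee_added} that $c_1=\max\{\tr B_s,M\}$, the quantity $\gamma$—and therefore $c_3=e^n\max\{\gamma,1\}$—depends only on the fixed dimension $n$, on the index $s$, and on $M$ from Assumption~\ref{assumprionf1}, exactly as required. There is no deep obstacle here; the only point that needs attention is precisely this uniformity down to $m=1$. One cannot invoke a purely asymptotic ``for $m$ large'' bound, so the base must be inflated by the factor $\max\{\gamma,1\}$ in order to cover the smallest admissible value $m=1$ simultaneously with all larger ones.
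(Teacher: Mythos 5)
Your proof is correct, and it takes a genuinely more constructive route than the paper's. The paper fixes an arbitrary $\tilde c_3>1$, notes that $\gamma((j+1-s)+1)^n/\tilde c_3^{\,j+1-s}\to 0$ so the inequality holds eventually (for $j\ge j^*$), and then patches the finitely many initial indices by enlarging the base to $\max\{\tilde c_3,\gamma(\hat j-s+1)^n\}$ for a suitable $\hat j$; the resulting constant is defined only implicitly through the thresholds $j^*$ and $\hat j$. You instead obtain the bound uniformly down to $m=j+1-s=1$ in one stroke via $1+m\le e^m$, giving the explicit constant $c_3=e^n\max\{\gamma,1\}$, and your case analysis on whether $\gamma\ge 1$ correctly absorbs the prefactor. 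Both arguments yield a $c_3>1$ depending only on $s$ and $M$ (through $c_1=\max\{\tr B_s,M\}$ and $\det B_s$, with the dimension $n$ fixed); what your version buys is an explicit, limit-free formula, at the cost of a somewhat larger base, which is irrelevant for the contradiction argument in Theorem~\ref{Sconvergence} where only the existence of some such $c_2=2c_1c_3/(1-\beta)$ matters.
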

		In fact, once Lemma~\ref{lemma:appendix_constant} is proved, the constant $c_2$ (constant with respect to $j$) for which \eqref{eqq7 : nw} is verified, will be
		$c_2=2c_1c_3/(1-\beta)$ (note that $c_2$ depends only $s$, $M$, $\beta$ but not on $j$).
		\begin{proof}
			Fix $\tilde{c}_3>1$. Note that the sequence of positive numbers
			\begin{equation*}
			\frac{\gamma ((j+1-s)+1)^n}{\tilde{c}_3^{j+1-s}} \hbox{ for } j= s,s+1,\dots
			\end{equation*} 
			converges to zero as $j \to +\infty$; thus there exists $j^{*}\geq s$ (depending on $s$, $M$ and $\tilde{c}_3$) s.t.
			\begin{equation*}
			\gamma ((j+1-s)+1)^n \leq {\tilde{c}_3^{j+1-s}} \hbox{ for all } j \geq j^{*}.
			\end{equation*}
			Note also that for all $j \in \{s+1, \dots , j^{*}-1\}$ we have
			\begin{equation}\label{eq:appendix_gamma} 
			\gamma ((j+1-s)+1)^n \leq \gamma (j^{*}-s+1)^n
			\end{equation}
			and consider $\hat{j} \geq j^{*}$ s.t. $\gamma (\hat{j}-s+1)^n>1$ ($\hat{j}$ depends on $s$, $M$, $\gamma$ and $\tilde{c}_3$). From \eqref{eq:appendix_gamma} we have
			\begin{equation*}
			\gamma ((j+1-s)+1)^n \leq \gamma (\hat{j}-s+1)^n \leq (\gamma (\hat{j}-s+1)^n)^{j+1-s} 
			\end{equation*}
			for all  $j \in \{s,s+1, \dots j^{*}-1\}$.
			
			Collecting the above results, we can conclude that
			\begin{equation}
			\gamma ((j+1-s)+1)^n \leq c_3^{j+1-s} \hbox{ for all } j \geq s
			\end{equation}
			where $c_3:=\max \{ \tilde{c}_3, \gamma(\hat{j}-s+1)^n\}$ ($c_3>1$ and depends on $s$, $M$ and $\tilde{c}_3$).
			
			Finally note that, once $\tilde{c}_3$ is fixed, it is clear that $c_3$ depends only on $s,M$.
		\end{proof}
		In order to prove inequality \eqref{eqq7 : nw_2}, define $a_k:=(1-\phi -\psi_k\phi)\|\g_k\|^2/\s_k^{T}(-\g_k)>0$. We know that $\lim_{k \to +\infty}a_k=+\infty$ and we have to show that there exists $j^{*}\geq s$ such that
		\begin{equation}\label{eqq7 : nw_2_bis} 
		\prod_{k=s}^{j} a_k > c_2^{j+1-s} \hbox{ for all } j \geq j^{*}.
		\end{equation} 
		If $a_k\geq c_2$ for all $k \geq s$, since it must be $a_k>c_2$ for infinite indexes $k$, then the thesis is obvious. So assume that there exists some index $k$ such that $a_k < c_2$. Let $r \geq s$ be such that $a_k>c_2$ for all $k>r$. Note that $c_2>\min_{k=s, \dots, r}a_k$. Set  
		\begin{equation*}
		t:=\big(  \frac{c_2}{\min_{k=s, \dots, r}a_k}  \big)^{r+1-s} >1.
		\end{equation*}
		Let $j^{*}>r+1$ be such that $a_k \geq t c_2$ for all $k\geq j^{*}$. Then we have
		\begin{equation*}
		\begin{split}
		& \prod_{k=s}^{j^{*}}a_k=(\prod_{k=s}^{r}a_k)(\prod_{k=r+1}^{j^{*}-1}a_k)a_{j^{*}}> \\
		& (\min_{k=s,\dots,r}a_k)^{r-s+1}c_2^{j^{*}-r-1}tc_2=\\
		& (\min_{k=s,\dots,r}a_k)^{r-s+1}\big(  \frac{c_2}{\min_{k=s, \dots, r}a_k}  \big)^{r-s+1}c_2^{j^{*}-r}=c_2^{j^*-s+1},
		\end{split}
		\end{equation*}
		i.e., $\prod_{k=s}^{j^{*}}a_k>c_2^{j^*-s+1}$. Thus we obtain \eqref{eqq7 : nw_2_bis} since $a_k\geq tc_2>c_2$ for $k> j^{*}$.
\bibliographystyle{abbrv}
\bibliography{optbib}
\end{document}